\def\NZQ{\mathbb}               
\def\NN{{\NZQ N}}
\def\QQ{{\NZQ Q}}
\def\ZZ{{\NZQ Z}}
\def\RR{{\NZQ R}}
\def\PP{{\NZQ P}}
\newtheorem{Theorem}{Theorem}[section]
\newtheorem{Lemma}[Theorem]{Lemma}
\newtheorem{Corollary}[Theorem]{Corollary}
\newtheorem{Proposition}[Theorem]{Proposition}
\newtheorem{Remark}[Theorem]{Remark}
\newtheorem{Example}[Theorem]{Example}
\let\epsilon\varepsilon
\let\phi=\varphi
\let\kappa=\varkappa
\begin{document}

\title{Analytic spread and associated primes of divisorial filtrations}
\author{Steven Dale Cutkosky}
\thanks{Partially supported by NSF grant DMS-2054394}

\address{Steven Dale Cutkosky, Department of Mathematics,
University of Missouri, Columbia, MO 65211, USA}
\email{cutkoskys@missouri.edu}

\subjclass[2000]{primary 13A15, 13A18;  secondary 13A02, 14C20}

\begin{abstract} 
We prove that a classical theorem of McAdam about the  analytic spread of an ideal in a Noetherian local ring is true for divisorial filtrations on an  excellent local domain $R$ which is either of equicharacteristic zero or of dimension $\le 3$. In fact, the proof is valid whenever resolution of singularities holds.
\end{abstract}

\maketitle

\section{Introduction} Expositions of the theory of complete ideals, integral closure of ideals and their relation to valuation ideals, Rees valuations, analytic spread and birational morphisms can be found, from different perspectives,  in  \cite{ZS2}, \cite{HS}, \cite{Li2} and \cite{Li3}. The book \cite{HS} and the article \cite{Li3} contain references to  original work in this subject.  A survey of recent work on symbolic algebras is given in \cite{DDGHN}. A different notion of analytic spread for families of ideals is given in \cite{DM}. A recent paper exploring ideal theory in  2 dimensional normal local domains using geometric methods is \cite{ORWY}.

Let $R$ be a Noetherian local ring with maximal ideal $m_R$. If $I$ is an ideal in $R$, then the Rees algebra of $I$ is the graded $R$-algebra
$R[It]=\sum_{n\ge 0}I^nt^n$ and the analytic spread of $I$ is 
$$
\ell(I)=\dim R[It]/m_RR[It].
$$ 
It is natural to extend this definition to arbitrary filtrations of $R$.
Let $\mathcal I=\{I_n\}$ be a filtration on  $R$. The Rees algebra of the filtration is $R[\mathcal I]=\oplus_{n\ge 0}I_n$ and the analytic spread of the filtration $\mathcal  I$ is defined to be
\begin{equation}\label{eqN10}
\ell(\mathcal I)=\dim R[\mathcal I]/m_RR[\mathcal I].
\end{equation}
Thus if $I$ is an ideal in $R$ and $\mathcal I=\{I^n\}$ is the $I$-adic filtration, then $\ell(I)=\ell(\mathcal I)$.
The essential new phenomena in the case of arbitrary filtrations $\mathcal I$ is that the Rees algebra $R[\mathcal I]$ may not be Noetherian.  

An essential property of the analytic spread $\ell(I)$ of an ideal is the inequality (\cite[Proposition 5.1.6 and Corollary 8.3.9]{HS}) 
\begin{equation}\label{eqI1}
{\rm ht}(I)\le \ell(I)\le \dim R.
\end{equation}

If $\mathcal I$ is a filtration, then 
${\rm ht}(I_n)={\rm ht}(I_1)$ for all $n$ (\cite[equation (7)]{CPS}) so it is natural to define ${\rm ht}(\mathcal I)={\rm ht}(I_1)$.

We always have (\cite[Lemma 3.6]{CPS}) that 
\begin{equation}\label{eqZ1}
\ell(\mathcal I)\le \dim R
\end{equation}
so the second inequality of (\ref{eqI1}) always holds for filtrations. 
 However, the first inequality of (\ref{eqI1}),
$\mbox{ht}(\mathcal I)\le \ell(\mathcal I)$,
fails spectacularly, even attaining the condition that $\ell(\mathcal I)=0$ (\cite[Example 1.2, Example 6.1 and Example 6.6]{CPS}). The last two of these examples are of symbolic algebras of space curves, which are  divisorial filtrations. 

 The condition that  a filtration has  analytic spread zero has a simple ideal theoretic interpretation (\cite[Lemma 3.8]{CPS}).
 Suppose that  $\mathcal I=\{I_n\}$ is a filtration in a local ring $R$. Then the analytic spread
$\ell(\mathcal I)=0$ if and only if
$$
\mbox{For all $n>0$ and $f\in I_n$, there exists $m>0$ such that $f^m\in m_RI_{mn}$.}
$$

Let $I$ be an ideal in a Noetherian local domain $R$.
The integral closure of the Rees algebra $R[It]=\sum_{n\ge 0}I^nt^n$ in $R[t]$ is $\sum_{n\ge 0}\overline{I^n}t^n$. The ideal $\overline{I^n}$ in $R$ is called the integral closure of $I^n$ in $R$. Divisorial valuations of $R$ are defined later in this introduction.
There exist divisorial valuations $\nu_1,\ldots,\nu_r$ of $R$ called the Rees valuations of $I$ and $a_1\ldots,a_r\in \ZZ_{>0}$ such that 
$$
\overline{I^n}=I(\nu_1)_{na_1}\cap\cdots\cap I(\nu_r)_{na_r}
$$
for $n\in \NN$. This expression is irredundant, in the sense that for all $j$, there exists an $n$ such that  $\overline{I^n}\ne \cap_{n\ne j}I(\nu_i)_{na_i}$. This is proven (even for a generalization of this statement to arbitrary Noetherian rings) in \cite{Re} and \cite[Theorem 10.2.2]{HS}. 

We have the following remarkable theorem.

\begin{Theorem}\label{TheoremC4}(\cite{McA}, \cite[Theorem 5.4.6]{HS}) Let $R$ be a formally equidimensional local ring and $I$ be an ideal in $R$. Then $m_R\in \mbox{Ass}(R/\overline{I^n})$ for some $n$ if and only if $\ell(I)=\dim(R)$.
\end{Theorem}

The assumption of being formally equidimensional is not required for the if direction of Theorem \ref{TheoremC4} (this is Burch's theorem, \cite{Bu}, \cite[Proposition 5.4.7]{HS}).

We now consider divisorial filtrations. 
Given an ideal $I$ in $R$, the filtration $\{\overline{I^n}\}$ is an example of a divisorial filtration of $R$. The filtration 
$\{\overline{I^n}\}$ is Noetherian if $R$ is universally Nagata. The symbolic powers of an ideal is another example of a divisorial filtration.

 Let $R$ be a  Noetherian local domain of dimension $d$ with quotient field $K$.  Let $\nu$ be a discrete valuation of $K$ with valuation ring $V_{\nu}$ and maximal ideal $m_{\nu}$.  Suppose that $R\subset V_{\nu}$. Then for $n\in \NN$, define valuation ideals
$$
I(\nu)_n=\{f\in R\mid \nu(f)\ge n\}=m_{\nu}^n\cap R.
$$
 
  A divisorial valuation of $R$ (\cite[Definition 9.3.1]{HS}) is a valuation $\nu$ of $K$ such that if $V_{\nu}$ is the valuation ring of $\nu$ with maximal ideal $m_{\nu}$, then $R\subset V_{\nu}$ and if $p=m_{\nu}\cap R$ then $\mbox{trdeg}_{\kappa(p)}\kappa(\nu)={\rm ht}(p)-1$, where $\kappa(p)$ is the residue field of $R_{p}$ and $\kappa(\nu)$ is the residue field of $V_{\nu}$.   If $\nu$ is divisorial valuation of $R$ such that $m_R= m_{\nu}\cap R$, then $\nu$ is called an $m_R$-valuation.
   
 By \cite[Theorem 9.3.2]{HS}, the valuation ring of every divisorial valuation $\nu$ is Noetherian, hence is a  discrete valuation. 
 Suppose that  $R$ is an excellent local domain. Then a valuation $\nu$ of the quotient field $K$ of $R$ which is nonnegative on $R$ is a divisorial valuation of $R$ if and only if the valuation ring $V_{\nu}$  of $\nu$ is essentially of finite type over $R$ (\cite[Lemma 5.1]{CPS1}).
 
 In general, the filtration $\mathcal I(\nu)=\{I(\nu)_n\}$ is not Noetherian; that is, the graded $R$-algebra $\sum_{n\ge 0}I(\nu)_nt^n$ is not a finitely generated $R$-algebra. In a two dimensional normal local ring $R$, the condition that the filtration of valuation ideals $\mathcal I(\nu)$ is Noetherian for all $m_R$-valuations $\nu$ dominating $R$  is the condition (N) of Muhly and Sakuma \cite{MS}. It is proven in \cite{C2} that a complete normal local ring of dimension two satisfies condition (N) if and only if its divisor class group is a torsion group.
 
 An integral  divisorial filtration of $R$ (which we will also refer to as a divisorial filtration) is a filtration $\mathcal I=\{I_m\}$ such that  there exist divisorial valuations $\nu_1,\ldots,\nu_s$ and $a_1,\ldots,a_s\in \ZZ_{\ge 0}$ such that for all $m\in \NN$,
$$
I_m=I(\nu_1)_{ma_1}\cap\cdots\cap I(\nu_s)_{ma_s}.
$$

$\mathcal I$ is called an $\RR$-divisorial filtration if $a_1,\ldots,a_s\in \RR_{>0}$ and $\mathcal I$ is called a $\QQ$-divisorial filtration if $a_1,\ldots,a_s\in \QQ$. If $a_i\in \RR_{>0}$, then
$$
I(\nu_i)_{na_i}:=\{f\in R\mid \nu_i(f)\ge na_i\}=I(\nu_i)_{\lceil na_i\rceil},
$$
where $\lceil x\rceil$ is the round up of a real number.

It is shown in \cite[Theorem 4.6]{CPS} that 
 the ``if" statement of  Theorem \ref{TheoremC4} is true for $\RR$-divisorial filtrations of a local domain $R$. This theorem is stated for divisorial filtrations, but the proof is valid for $\RR$-divisorial filtrations. 
 
 \begin{Theorem}(\cite[Theorem 4.6]{CPS})\label{TheoremN2} Suppose that $R$ is a Noetherian local domain and $\mathcal I=\{I_n\}$ is  an $\RR$-divisorial filtration on $R$ such that $\ell(\mathcal I)=\dim R$. Then there exists $n_0\in \NN$ such that $m_R\in \mbox{Ass}(R/\overline{I^n})$ for all $n\ge n_0$.
 \end{Theorem}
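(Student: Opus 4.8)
The plan is to move the whole problem onto a birational model, where $\mathcal I$ becomes the system of global sections of the powers of a line bundle; the algebraic statement then turns into a comparison of spaces of sections, and the analytic spread hypothesis gets read off as a positivity property of that bundle along the exceptional fibre over the closed point.

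First I would make the standard reductions. If $d:=\dim R\le 1$ there is nothing to prove, since then $R/I_n$ has dimension $0$ and $m_R\in\mbox{Ass}(R/I_n)$ whenever $I_n\ne R$; so assume $d\ge 2$. Passing to the integral closure $\widetilde R$ of $R$ (module–finite and normal in the cases of interest) changes neither $\ell(\mathcal I)$ for the extended filtration nor the membership of the maximal ideal in the relevant associated primes, so we may assume $R$ normal. After replacing $\mathcal I$ by a Veronese $\{I_{en}\}_n$, normalising the representation, and discarding the first few indices, we may assume $I_n=\bigcap_{i=1}^s I(\nu_i)_{na_i}$ is irredundant with $\nu_i(I_n)=na_i$ for all $n$, so that in particular the Rees valuations of $I_n$ are exactly $\nu_1,\dots,\nu_s$ and $\overline{I_n^{\,m}}=I_{mn}$ for all $m$. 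Finally I would fix a normal projective birational morphism $\varphi\colon X\to\mbox{Spec}\,R$ on which every $\nu_i$ is realised by a prime divisor $E_i$ (for a \emph{finite} family of divisorial valuations a normalised blow–up suffices; no resolution of singularities is needed here), and set $D=\sum_i a_iE_i$, so that $I_n=\Gamma(X,\mathcal O_X(-nD))$ and $R[\mathcal I]=\bigoplus_n\Gamma(X,\mathcal O_X(-nD))$. The $\RR$–divisorial case is treated by the same argument applied to $\RR$–divisors together with routine rational approximation.

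Next I would carry out the two translations. Let $S=\{\,i:\nu_i\ \text{is an }m_R\text{-valuation}\,\}$, equivalently the set of $i$ with $E_i\subseteq\varphi^{-1}(m_R)$, put $c_i=\nu_i(m_R)\in\ZZ_{>0}$ for $i\in S$ and $c_i=0$ otherwise, and let $E=\sum_{i\in S}c_iE_i$, an effective divisor contracted to the closed point. A direct computation from the divisorial representation gives $(I_n:m_R)=\bigcap_i I(\nu_i)_{na_i-c_i}=\Gamma(X,\mathcal O_X(-nD+E))$, hence
\[
m_R\in\mbox{Ass}(R/I_n)\iff \Gamma(X,\mathcal O_X(-nD+E))\supsetneq\Gamma(X,\mathcal O_X(-nD)).
\]
On the other hand $\ell(\mathcal I)=\dim\bigl(R[\mathcal I]/m_RR[\mathcal I]\bigr)$ is the dimension of the fibre cone $\bigoplus_n\Gamma(X,\mathcal O_X(-nD))\otimes_Rk$, and analysing this over the closed fibre of $\varphi$ (via the theorem on formal functions) one shows that $\ell(\mathcal I)=d$ forces $S\ne\varnothing$ and, more precisely, forces a component $E_{i_0}$ with $i_0\in S$ along which $-D$ is big, i.e.\ $\dim\bigoplus_n\Gamma\bigl(E_{i_0},\mathcal O_X(-nD)|_{E_{i_0}}\bigr)=d=\dim E_{i_0}+1$. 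This last equivalence — ``$\ell(\mathcal I)=\dim R$'' $\iff$ ``$-D$ is big along some $m_R$–exceptional $E_{i_0}$'' — is the filtration analogue of Theorem \ref{TheoremC4} and is the heart of the matter.

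It then remains to close the loop. Bigness of $-D$ along $E_{i_0}$ makes $\dim_k\Gamma(E_{i_0},\mathcal O_X(-nD)|_{E_{i_0}})$ grow like $n^{d-1}$; feeding this into the exact sequence $0\to\mathcal O_X(-nD)\to\mathcal O_X(-nD+E)\to\mathcal O_E(-nD+E)\to 0$ and controlling the higher–cohomology error terms yields $\Gamma(X,\mathcal O_X(-nD+E))\supsetneq\Gamma(X,\mathcal O_X(-nD))$ for all $n\gg 0$, which by the displayed equivalence is exactly $m_R\in\mbox{Ass}(R/I_n)$ for $n\ge n_0$. Undoing the Veronese made at the start, and using that the membership $m_R\in\mbox{Ass}(R/I_{n_1})$ propagates along multiples (if $(I_{n_1}:g)=m_R$ then $(I_{(j+1)n_1}:gh)=m_R$ for a suitable $h\in I_{jn_1}$, by the normalisation $\nu_{i_0}(I_{jn_1})=jn_1a_{i_0}$), one gets the conclusion for every sufficiently large $n$. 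The step I expect to be the main obstacle is precisely the core equivalence above: extracting the geometric positivity of $-D$ along an $m_R$–exceptional divisor from the bare equality $\ell(\mathcal I)=\dim R$. This is delicate because $R[\mathcal I]$ is a Krull domain that is in general neither Noetherian nor catenary, so there is no naive height–one prime of $R[\mathcal I]$ over $m_RR[\mathcal I]$ to argue with, and because one needs enough vanishing of higher cohomology to promote ``$\supseteq$'' to a strict inclusion for all large $n$ rather than just infinitely many. The remaining steps (the reductions and the propagation along multiples) are routine by comparison.
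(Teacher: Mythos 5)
The first thing to say is that the paper contains no proof of Theorem \ref{TheoremN2}: it is imported verbatim from \cite[Theorem 4.6]{CPS}, so there is no internal argument to measure you against. The argument in \cite{CPS} is purely ideal- and valuation-theoretic and needs no birational model at all: writing $T$ for the set of indices $i$ such that $\nu_i$ is an $m_R$-valuation, one has $m_R\in\mbox{Ass}(R/I_n)$ if and only if $I_n\ne\bigcap_{i\notin T}I(\nu_i)_{na_i}=(I_n:_Rm_R^{\infty})$ (each $I(\nu_i)_b$ is primary to the center of $\nu_i$); one then shows that a divisorial filtration none of whose valuations is an $m_R$-valuation has analytic spread at most $\dim R-1$, so $\ell(\mathcal I)=\dim R$ forces a witness $y$ with $\nu_{i_0}(y)<n_1a_{i_0}$ for some $i_0\in T$ and $\nu_i(y)\ge n_1a_i$ for $i\notin T$; and finally one propagates this witness to all large $n$ using powers of $y$ and elements of $I_r$ of minimal $\nu_{i_0}$-value. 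Your translation of the associated-prime condition into $\Gamma(X,\mathcal O_X(-nD+E))\supsetneq\Gamma(X,\mathcal O_X(-nD))$ and your propagation step are correct and consistent with this (though the propagation has round-up traps for irrational $a_i$ that are best handled by the strict slack $\epsilon=n_1a_{i_0}-\nu_{i_0}(y)>0$ rather than by ceilings).

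The proposal nevertheless has a genuine gap, exactly where you flag one, and the route you sketch for closing it would not work. The implication ``$\ell(\mathcal I)=\dim R$ $\Rightarrow$ $-D$ is big along some $m_R$-exceptional $E_{i_0}$'' is the entire content of the theorem, and you offer no argument beyond an appeal to the theorem on formal functions. To extract it you would need, first, that the kernel of $R[\mathcal I]/m_RR[\mathcal I]\to\prod_j\bigoplus_n\Omega_n^{(j)}$ is nilpotent, where $\Omega_n^{(j)}$ is the image of $I_n$ in $\Gamma(E_j,\mathcal O_X(-nD)\otimes\mathcal O_{E_j})$ --- the analogue of the radical computation $\sqrt{m_RR[\Delta]}=\bigcap P_j$, which already requires a substantial argument in dimension two in \cite{C4}; and second, that Krull dimension $d$ of the non-Noetherian graded algebra $\bigoplus_n\Omega_n^{(j)}$ forces $\dim_k\Omega_n^{(j)}$ to grow like $n^{d-1}$, which is false for general non-finitely-generated graded algebras and is precisely the issue the subfinite-algebra results of \cite{KT} are imported to sidestep in Section 5. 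Your closing cohomological step also fails outside dimension two: the cokernel of $\Gamma(X,\mathcal O_X(-nD))\subseteq\Gamma(X,\mathcal O_X(-nD+E))$ is the kernel of $\Gamma(E,\mathcal O_X(-nD+E)\otimes\mathcal O_E)\to H^1(X,\mathcal O_X(-nD))$, and for $\dim R\ge 3$ the module $H^1(X,\mathcal O_X(-nD))$ is not of finite length and is in no sense bounded, so comparing growth rates does not yield a strict inclusion; boundedness of $h^1$ is a two-dimensional phenomenon resting on Zariski decomposition. Finally, the reduction to $R$ normal is asserted but not available in the stated generality (the normalization of a Noetherian local domain need not be module-finite), and even when it is, transferring $m_R\in\mbox{Ass}$ back from the normalization is not routine: multiplying a witness by a conductor element raises its $\nu_{i_0}$-value and may push it into $I_n$. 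The valuation-theoretic route of \cite{CPS} avoids all of these difficulties.
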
 
 
 An interesting question is if the converse of Theorem \ref{TheoremC4} is also true for divisorial filtrations of a local ring $R$.
 We prove this for  excellent normal local rings for which resolution of singularities holds in this paper. 
  
\begin{Theorem}\label{Theorem1} Let $R$ be an  excellent local ring of equicharacteristic 0, or  of dimension $\le 3$. Let $\mathcal I=\{I_n\}$ be a $\QQ$-divisorial filtration on $R$. Suppose that $m_R\in \mbox{Ass}(R/I_{m_0})$ for some $m_0\in \ZZ_{>0}$. Then the analytic spread of $\mathcal I$ is $\ell(\mathcal I)=\dim R$. Further, there exists $n_0\in \ZZ_{>0}$ such that $m_R\in \mbox{Ass}(R/I_n)$ if $n\ge n_0$.
\end{Theorem}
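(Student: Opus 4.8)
The plan is to reduce the statement to a geometric computation on a resolution of singularities where the divisorial filtration becomes a filtration of ideal sheaves associated to an effective $\mathbb{Q}$-divisor supported on the exceptional locus together with the valuations defining $\mathcal I$. Write $\mathcal I=\{I(\nu_1)_{na_1}\cap\cdots\cap I(\nu_s)_{na_s}\}$ with $a_i\in\mathbb{Q}_{>0}$, and clear denominators so that it suffices to analyze the Rees algebra of $\mathcal I$ after passing to a Veronese subalgebra (which changes neither $\ell(\mathcal I)$ nor the associated-prime condition, since $m_R\in\mathrm{Ass}(R/I_n)$ for some $n$ is equivalent to the same for some multiple of $n$ by $I_{kn}\subseteq I_n$ and a colon-ideal argument). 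Thus I may assume each $a_i\in\mathbb{Z}_{>0}$, i.e. $\mathcal I$ is an integral divisorial filtration.

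Next I would invoke resolution of singularities (available in equicharacteristic $0$ in all dimensions, and in dimension $\le 3$ in all characteristics by Abhyankar/Cossart–Piltant) to obtain a proper birational morphism $\varphi\colon X\to\mathrm{Spec}(R)$ with $X$ regular on which all the $\nu_i$ are realized by prime exceptional divisors $E_i$ (after possibly further blowing up), so that $I_n=\Gamma(X,\mathcal{O}_X(-nD))$ where $D=\sum a_iE_i$ is an effective divisor. The key point is to characterize $\ell(\mathcal I)=\dim R$ and the eventual appearance of $m_R$ in $\mathrm{Ass}(R/I_n)$ in terms of the geometry of $D$ on $X$ over the closed fiber $\varphi^{-1}(m_R)$. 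I expect the following chain of equivalences to be the technical heart: $m_R\in\mathrm{Ass}(R/I_{m_0})$ for some $m_0$ $\iff$ there is a component of $\varphi^{-1}(m_R)$, say a curve or point in the closed fiber, that is "$D$-rigid" in the sense that local sections vanishing to order $\ge m_0D$ but not contained in $m_R\cdot(\text{sections of }-m_0 D)$ exist $\iff$ the fiber of the Rees algebra $R[\mathcal I]/m_RR[\mathcal I]$ has a minimal prime of coheight $\dim R$ over the irrelevant ideal, which is exactly $\ell(\mathcal I)=\dim R$. I would make the forward direction ($m_R\in\mathrm{Ass}$ implies $\ell=\dim R$) by producing, from an element $f\in I_{m_0}$ with $(I_{m_0}:f)=m_R$, a suitable prime of the special fiber $\mathrm{Proj}$ construction; and the persistence statement ($m_R\in\mathrm{Ass}(R/I_n)$ for all $n\ge n_0$) by showing the relevant cohomological obstruction on $X$ is "monotone" in $n$ — once a first-syzygy-type obstruction appears at level $m_0$ it reappears, via tensoring the relevant short exact sequences with $\mathcal{O}_X(-(n-m_0)D)$ and using that $D$ is effective and $\varphi$-nef-enough on the exceptional locus to kill higher cohomology for $n\gg 0$.

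To organize the argument cleanly I would first prove a lemma that $\ell(\mathcal I)=\dim R$ is equivalent to the existence of a divisorial valuation $\mu$ of $R$ with $m_R=m_\mu\cap R$ that is a "Rees-type" valuation of the filtration, i.e. $\mu$ appears among the valuations needed to compute the integral closures $\overline{I_n}$ in a minimal way; this parallels Theorem \ref{TheoremC4} and its proof in \cite{HS}. Then $m_R\in\mathrm{Ass}(R/I_{m_0})$ gives such an $m_R$-valuation essentially by the same mechanism as in McAdam's theorem: the associated prime $m_R$ forces an asymptotic-order jump that can only be explained by an $m_R$-valuation contributing to the filtration. Conversely, Theorem \ref{TheoremN2} already gives that $\ell(\mathcal I)=\dim R$ forces $m_R\in\mathrm{Ass}(R/\overline{I^n})$ eventually — but I need it for $I_n$ itself, not $\overline{I^n}$; here I would use that $\mathcal I$ is a divisorial filtration, so $I_n=\overline{I_n}$ already, and the powers $\overline{I^n}$ of the single ideal $I=I_1$ are squeezed between $I_n$ and $I_1^n$ in a way that transfers the associated prime.

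The main obstacle I anticipate is the non-Noetherianity of $R[\mathcal I]$: one cannot directly quote the ideal-theoretic proof of McAdam's theorem, which uses finite generation of Rees algebras in an essential way (e.g. to talk about the fiber cone as a finitely generated algebra and its dimension). The workaround — and the real content of the paper — should be to replace $R[\mathcal I]$ by its normalization on a resolution $X$, where finiteness is restored fiber-by-fiber, and to run the dimension count for $\ell(\mathcal I)$ and the associated-prime analysis using the cohomology of the ideal sheaves $\mathcal{O}_X(-nD)$ rather than on $R[\mathcal I]$ directly; controlling the interplay between these sheaf-cohomology obstructions and the (non-finitely-generated) algebra $R[\mathcal I]$, and ensuring the "eventual" quantifier $n\ge n_0$ is uniform, is where the proof will require the most care.
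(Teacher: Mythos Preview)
Your proposal has a genuine gap at the critical step. You write that the persistence and dimension arguments will go through ``using that $D$ is effective and $\varphi$-nef-enough on the exceptional locus to kill higher cohomology for $n\gg 0$.'' But $-D$ is \emph{not} nef in general, and in dimension $\ge 3$ there is no Zariski decomposition to replace $D$ by a nef model with the same section spaces (this failure is exactly what the paper flags in the introduction, citing \cite{C1}, \cite{CS}, \cite{Nak}). Without nefness, the cohomology groups $H^i(X,\mathcal O_X(-nD))$ need not be bounded, so your ``monotone cohomological obstruction'' argument---tensoring with $\mathcal O_X(-(n-m_0)D)$ and invoking vanishing---does not go through. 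Likewise, your proposed lemma (that $\ell(\mathcal I)=\dim R$ is equivalent to the existence of an $m_R$-valuation of ``Rees type'' for the filtration) is essentially a restatement of the theorem and does not provide an independent mechanism.

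The paper's actual argument replaces Zariski decomposition by the asymptotic invariant $\gamma_{\Gamma}(D)=\lim_{m\to\infty}\tau_{\Gamma}(mD)/m$ from Nakayama. The key step you are missing is: from $m_R\in\mathrm{Ass}(R/I_{m_0})$ one singles out an exceptional component $F_j$ over $m_R$ and proves $\gamma_{F_j}(-D)=0$ (otherwise Lemma~\ref{LemmaE} would force $I(nD)=I(n\sum_{i\ne j}a_iF_i)$ for all $n$, contradicting the associated-prime hypothesis). With $\gamma_{F_j}=0$ in hand, Lemma~\ref{LemmaC} lets one write $-D=\Theta+\epsilon A$ with $A$ ample and a multiple of $\Theta$ linearly equivalent to an effective divisor missing $F_j$; restricting to $F_j$ then produces a nonzero section $\tau$ that embeds a finitely generated graded $k$-algebra $S$ of dimension $\dim R$ (a section ring of an ample line bundle on $F_j$) into the image algebra $\bigoplus_n\Omega_{nm}$. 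The non-Noetherian fiber algebra is then sandwiched $S\subset\bigoplus_n\Omega_{nm}\subset T$ between finitely generated $k$-algebras of dimension $\dim R$, and the Kemper--Trung result on subfinite algebras gives $\dim\bigoplus_n\Omega_n=\dim R$, hence $\ell(\mathcal I)=\dim R$. The reduction from excellent to normal (Section~6) is then a separate, more routine, integrality argument over the normalization. None of this machinery---the $\gamma$-invariant, the ample-splitting lemma, or the subfinite sandwich---appears in your outline, and the nefness substitute you propose is precisely what fails above dimension two.
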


The following theorem is immediate from Theorems \ref{TheoremN2} and \ref{Theorem1}. Theorem \ref{Theorem2}  generalizes
Theorem \ref{TheoremC4} from ideals to filtrations.  

\begin{Theorem}\label{Theorem2} Let $R$ be an excellent local ring of equicharacteristic 0, or  of dimension $\le 3$. Let $\mathcal I=\{I_n\}$ be a $\QQ$-divisorial filtration on $R$. Then the following are equivalent.
\begin{enumerate}
\item[1)] The analytic spread of $\mathcal I$ is $\ell(\mathcal I)=\dim R$.
\item[2)] There exists $n_0\in \ZZ_{>0}$ such that $m_R\in \mbox{Ass}(R/I_n)$ if $n\ge n_0$.
\item[3)] $m_R\in \mbox{Ass}(R/I_{m_0})$ for some $m_0\in \ZZ_{>0}$.
\end{enumerate}
\end{Theorem}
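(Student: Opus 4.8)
The plan is to obtain Theorem~\ref{Theorem2} as a purely formal consequence of the two results already at our disposal, Theorem~\ref{TheoremN2} and Theorem~\ref{Theorem1}, and then to indicate how I would attack Theorem~\ref{Theorem1}, where all of the actual work lies. Observe first that a $\QQ$-divisorial filtration is in particular an $\RR$-divisorial filtration, so both cited theorems apply to $\mathcal I$. The implications $3)\Rightarrow 1)$ and $3)\Rightarrow 2)$ are exactly the two conclusions of Theorem~\ref{Theorem1}, applied with the given $m_0$. The implication $2)\Rightarrow 3)$ is trivial, taking $m_0=n_0$. Finally $1)\Rightarrow 3)$ is Theorem~\ref{TheoremN2}: from $\ell(\mathcal I)=\dim R$ it yields $n_0$ with $m_R\in\mbox{Ass}(R/I_n)$ for all $n\ge n_0$, and we take $m_0=n_0$. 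Hence $3)$ implies each of $1)$ and $2)$, while each of $1)$ and $2)$ implies $3)$, so the three statements are equivalent.

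The content of the paper is thus Theorem~\ref{Theorem1}, and here is the route I would take. First reduce to a convenient model: replacing $\mathcal I$ by a Veronese subfiltration $\{I_{nq}\}$ reduces the computation of $\ell(\mathcal I)$ to the case in which the coefficients $a_i$ are positive integers (the ``for all $n\ge n_0$'' assertion must then be recovered by treating the residue classes of $n$ modulo $q$ separately), and a further standard reduction allows me to assume that $R$ is a normal local domain of dimension $d$. Since $R$ is of equicharacteristic $0$ or of dimension $\le 3$, resolution of singularities holds, so I may choose a projective birational morphism $\varphi\colon X\to\mbox{Spec}(R)$ with $X$ regular such that each $\nu_i$ equals the order of vanishing $\mbox{ord}_{E_i}$ along a prime divisor $E_i$ on $X$, and such that $(\bigcup_i E_i)\cup\varphi^{-1}(m_R)$ is a simple normal crossings divisor. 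Putting $D=\sum_i a_iE_i$, an effective divisor on $X$, we then have $I_n=\Gamma(X,\mathcal O_X(-nD))$ for all $n$, and the whole question becomes one about the effective divisor $D$ relative to the configuration $\varphi^{-1}(m_R)$.

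The heart of the matter is to translate the hypothesis and the two conclusions into geometry on $X$ and to match them. The hypothesis $m_R\in\mbox{Ass}(R/I_{m_0})$ provides a nonzero $g\in R$ with $\mbox{div}(g)\not\ge m_0D$ but $\mbox{div}(g)+\mbox{div}(h)\ge m_0D$ for every $h\in m_R$; analyzing this along the components of $\varphi^{-1}(m_R)$ should force the existence of a prime divisor $F$, on $X$ or a further blowup, whose center on $\mbox{Spec}(R)$ is $\{m_R\}$ and which ``sees $D$ at full strength''. From such an $F$ I would produce a $(d-1)$-dimensional component of the fibre over $m_R$ of $\mbox{Proj}(R[\mathcal I])$ — equivalently, show that the special fibre ring $\bigoplus_n I_n/m_RI_n$ has Krull dimension $d$ — which is the assertion $\ell(\mathcal I)=\dim R$. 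To pass from the single witness $g$ to witnesses for all large $n$, fix an index $i_0$ with $\mbox{ord}_{E_{i_0}}(g)<m_0a_{i_0}$ and choose $t\in I_{n-m_0}$ with $\mbox{ord}_{E_{i_0}}(t)$ equal to the minimal possible value $(n-m_0)a_{i_0}$; then $g_n:=gt$ satisfies $m_Rg_n\subseteq I_n$ and $g_n\notin I_n$, so $(I_n:g_n)=m_R$. Producing such a $t$ for all $n\ge n_0$ is where one uses that the decomposition defining $\mathcal I$ is irredundant in an asymptotically stable way.

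The main obstacle is exactly the difficulty flagged in the introduction: $R[\mathcal I]$ need not be Noetherian, so one cannot invoke Brodmann-type stabilization of associated primes nor the Rees-algebra arguments underlying Theorem~\ref{TheoremC4}; in particular the step from ``$m_R\in\mbox{Ass}(R/I_{m_0})$ for some $m_0$'' to ``$m_R\in\mbox{Ass}(R/I_n)$ for all $n\ge n_0$'' is genuinely substantive rather than formal. Resolution of singularities is precisely what replaces Noetherianity: on $X$ the whole filtration is governed by a single effective $\QQ$-divisor within a normal crossings configuration, and it is there that both the dimension count giving $\ell(\mathcal I)=d$ and the eventual persistence of $m_R$ as an associated prime must be established — which is exactly why the hypotheses on $R$ are those under which resolution is known, as the abstract indicates.
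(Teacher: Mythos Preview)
Your first paragraph is correct and is exactly the paper's proof: Theorem~\ref{Theorem2} is stated as ``immediate from Theorems~\ref{TheoremN2} and~\ref{Theorem1}'', and you have spelled out precisely those implications.

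The remainder of your proposal is a sketch toward Theorem~\ref{Theorem1}, which is not the statement under review; but since you included it, a brief comparison is warranted. Your reductions (Veronese to clear denominators, pass to the normalization, choose a resolution carrying the $\nu_i$ as prime divisors) are the same as the paper's. After that the paper's argument is rather different from what you outline. The paper does not attempt to manufacture, for each large $n$, an element $t\in I_{n-m_0}$ of minimal order along a fixed $E_{i_0}$; in fact the ``for all $n\ge n_0$'' clause in Theorem~\ref{Theorem1} is not proved directly at all, but is recovered a posteriori from Theorem~\ref{TheoremN2} once $\ell(\mathcal I)=\dim R$ is known. For the dimension count itself, the paper introduces an asymptotic invariant $\gamma_{\Gamma}(-D)$ (an analogue of Nakayama's $\sigma$-function), shows from the hypothesis that $\gamma_{F_j}(-D)=0$ for the relevant exceptional component $F_j$, and uses this together with an ample twist to produce a section on $F_j$ that embeds a finitely generated $k$-algebra of dimension $\dim R$ into a Veronese of $\bigoplus_n\Omega_n$, where $\Omega_n$ is the image of $I_n$ in $\Gamma(F_j,\mathcal O_X(-\lceil nD\rceil)\otimes\mathcal O_{F_j})$; the conclusion then comes from a sandwich between subfinite $k$-algebras and a dimension result of Kemper--Trung. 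Your ``find a prime divisor $F$ that sees $D$ at full strength and read off a $(d-1)$-dimensional component of the special fibre'' is the right intuition, but the mechanism the paper uses to turn that intuition into a dimension bound is this $\gamma$-function/subfinite-algebra argument rather than the explicit element construction you propose.
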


The conclusions of Theorem \ref{Theorem1} do not hold for more general filtrations.

\begin{Example} There exist $\RR$-divisorial filtrations $\mathcal I$ such that 
the conclusions of Theorem \ref{Theorem1} are false. 
\end{Example}

\begin{proof} Let $R=k[[x]]$ be a power series ring in one variable over a field $k$. Let $I_n=(x^{\lceil n\pi\rceil})$ for $n\in \NN$, and $\mathcal I=\{I_n\}$. For fixed $n$, $r\lceil n\pi\rceil>\lceil rn\pi\rceil +1$ for some  $r\in \ZZ_{>0}$. Thus $f\in I_n$ implies $f^r\in m_RI_{nr}$. Thus $\ell(\mathcal I)=\dim R[\mathcal I]/m_RR[\mathcal I]=0<1=\dim R$.
\end{proof}

Theorem \ref{Theorem2} is proven for two dimensional normal excellent local rings in \cite{C4}. In \cite{C4}, the technique of Zariski decomposition of divisors on two dimensional nonsingular schemes is used to reduce the analysis of $\QQ$-divisors on a resolution of singularity of a two dimensional excellent normal domain to the case of numerically effective $\QQ$-divisors, which have very good properties, allowing the proof  of the equivalence  of all of the  statements of Theorem \ref{Theorem2} in dimension 2, using these geometric methods. Zariski decomposition does not exist in higher dimensions, even after blowing up (\cite{C1},  \cite{CS}, \cite[Section IV.2.10]{Nak}, \cite[Section 2.3]{LA}), so a different method must be used in higher dimensions.

 \section{Divisorial filtrations on normal excellent local rings} Let $R$ be a normal excellent local ring. Let $\mathcal I=\{I_m\}$ where 
 $$
I_m=I(\nu_1)_{ma_1}\cap\cdots\cap I(\nu_s)_{ma_s}.
$$ 
 for some divisorial valuations $\nu_1,\ldots,\nu_s$ of $R$ be an $\RR$-divisorial filtration of $R$, with $a_1,\ldots, a_s\in \RR_{>0}$. Then there exists a projective birational morphism $\phi:X\rightarrow \mbox{Spec}(R)$ such that there exist prime divisors $F_1,
 \ldots, F_s$ on $X$ such that $V_{\nu_i}=\mathcal O_{X,F_i}$ for  $1\le i\le s$ (\cite[Remark 6.6 to Lemma 6.5]{CPS1}). Let $D=a_1F_1+\cdots+a_sF_s$, an effective $\RR$-divisor on $X$ (an effective $\RR$-Weil divisor). Define $\lceil D\rceil=\lceil a_1\rceil F_1+\cdots+\lceil a_s\rceil F_s$, an integral divisor. 
 We have coherent sheaves $\mathcal O_X(-\lceil n D\rceil)$ on $X$ such that 
 \begin{equation}\label{N1}
 \Gamma(X,\mathcal O_X(-\lceil nD\rceil ))=I_n
 \end{equation}
 for $n\in \NN$. If $X$ is nonsingular then $\mathcal O_X(-\lceil nD\rceil)$ is invertible. The formula (\ref{N1}) is independent of choice of $X$. Further, even on a particular $X$, there are generally many different choices of effective $\RR$-divisors $G$ on $X$ such that $\Gamma(X,\mathcal O_X(-\lceil nG\rceil))=I_n$ for all $n\in \NN$. Any choice of a divisor $G$ on such an $X$ for which the formula $\Gamma(X,\mathcal O_X(-\lceil nG\rceil))=I_n$ for all $n\in \NN$ holds will be called a representation of the filtration  $\mathcal I$. 
 
 Given an  $\RR$-divisor $D=a_1F_1+\cdots+a_sF_s$ on $X$ we have a divisorial filtration
 $\mathcal I(D)=\{I(nD)\}$ where 
 $$
 I(nD)=\Gamma(X,\mathcal O_X(-\lceil nD\rceil ))=I(\nu_1)_{\lceil na_1\rceil}\cap\cdots\cap I(\nu_s)_{\lceil na_s\rceil} 
 =I(\nu_1)_{na_1}\cap\cdots\cap I(\nu_s)_{na_s}.
 $$ 
 We write $R[D]=R[\mathcal I(D)]$.
 
 The following result will be used in our proof.
 
\begin{Lemma}\label{LemmaG} Suppose that $R$ is a universally Nagata domain and $X$ is an integral, projective $R$-scheme. Suppose that $D$ is an ample Cartier divisor on $X$. Then the algebra $\oplus_{n\ge 0}\Gamma(X,\mathcal O_X(nD))$ is a finitely generated $R$-algebra.
\end{Lemma}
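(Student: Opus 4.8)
The plan is to reduce, by a Veronese (order-of-magnitude) argument, to the case of a \emph{very ample} divisor, where the section ring is forced by Serre's theorems to be essentially a homogeneous coordinate ring, and then to assemble the general case. Note first that a universally Nagata domain is in particular Noetherian, so $X$ is a Noetherian scheme, projective over $\mathrm{Spec}(R)$; hence all cohomology groups of coherent sheaves on $X$ are finite $R$-modules, and Serre's vanishing and finiteness theorems are available. (Only the Noetherianity of $R$ is actually used in what follows.) Write $S=\bigoplus_{n\ge 0}\Gamma(X,\mathcal O_X(nD))$. Since $D$ is an ample Cartier divisor on the projective $R$-scheme $X$, choose $m_0>0$ so that $\mathcal O_X(1):=\mathcal O_X(m_0D)$ is very ample relative to $\mathrm{Spec}(R)$, and fix a closed immersion $i\colon X\hookrightarrow \mathbb P^N_R$ with $i^*\mathcal O(1)=\mathcal O_X(1)$. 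It suffices to prove: (i) the Veronese subalgebra $S^{(m_0)}=\bigoplus_{k\ge 0}\Gamma(X,\mathcal O_X(km_0D))$ is a finitely generated $R$-algebra; and (ii) $S$ is a finite module over $S^{(m_0)}$. Granting these, $S^{(m_0)}$ is Noetherian, and $S$ is a module-finite extension of it, hence again a finitely generated $R$-algebra.

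For (i), let $A=R[x_0,\dots,x_N]/I$ be the homogeneous coordinate ring of $X$ in this embedding, a finitely generated (Noetherian) $R$-algebra with $X=\mathrm{Proj}(A)$. For each $k$ there is a natural $R$-linear map $A_k\to \Gamma(X,\mathcal O_X(k))$, which is an isomorphism for $k\gg 0$ by Serre vanishing applied to the ideal sheaf of $X$ in $\mathbb P^N_R$; moreover each $\Gamma(X,\mathcal O_X(k))$ is a finite $R$-module by coherence of cohomology. Letting $\bar A$ denote the image of $A$ in $\bigoplus_{k\ge 0}\Gamma(X,\mathcal O_X(k))$, we see that $\bar A$ agrees with $\bigoplus_{k\ge 0}\Gamma(X,\mathcal O_X(k))$ in all large degrees while the remaining (finitely many) graded pieces are finite $R$-modules; hence $\bigoplus_{k\ge 0}\Gamma(X,\mathcal O_X(k))$ is a finite $\bar A$-module. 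As $\bar A$ is a quotient of $A$, it is a finitely generated $R$-algebra, and a module-finite extension of it is too. Thus $S^{(m_0)}$ is a finitely generated $R$-algebra.

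For (ii), decompose $S$ along arithmetic progressions of twists: $S=\bigoplus_{j=0}^{m_0-1}M^{(j)}$, where, since $D$ is Cartier,
$$M^{(j)}=\bigoplus_{k\ge 0}\Gamma\bigl(X,\mathcal O_X(jD)\otimes\mathcal O_X(1)^{\otimes k}\bigr)=\bigoplus_{k\ge 0}\Gamma\bigl(X,\mathcal O_X((km_0+j)D)\bigr)$$
is naturally a graded $S^{(m_0)}$-module. Because $\mathcal O_X(jD)$ is a coherent sheaf, Serre's theorem gives that $M^{(j)}$ is a finitely generated graded module over the polynomial ring $R[x_0,\dots,x_N]$: its low-degree pieces are finite $R$-modules by coherence, and its high-degree part is the truncation of a finite graded module $M$ with $\widetilde M\cong i_*(\mathcal O_X(jD))$. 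The $R[x_0,\dots,x_N]$-action on $M^{(j)}$ factors through $\bar A\subseteq S^{(m_0)}$, so $M^{(j)}$ is already finite over $S^{(m_0)}$. Summing over $j$, $S$ is a finite $S^{(m_0)}$-module, which is (ii), and the lemma follows.

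The proof is largely a bookkeeping exercise around Serre's vanishing and finiteness theorems, and I do not expect a genuine obstacle. The one step requiring care is (ii): one must pass from the section ring of the very ample bundle $\mathcal O_X(1)$ to the section module of an arbitrary coherent twist taken along an arithmetic progression, and verify that these modules are finitely generated \emph{over the coordinate ring $\bar A$} (equivalently over $S^{(m_0)}$) rather than merely over the ambient polynomial ring—this is where (i) is needed and where the reduction to a very ample divisor pays off.
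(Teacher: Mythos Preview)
Your proof is correct and follows essentially the same strategy as the paper: pass to a Veronese subring where the divisor becomes very ample, show that this subring is finitely generated via comparison with a homogeneous coordinate ring, and then prove the full section ring is module-finite over the Veronese by decomposing along residue classes modulo $m_0$. The only cosmetic difference is in the execution of step (ii): the paper argues by choosing $a$ so that $\mathcal O_X((r+am)D)$ is globally generated and then uses the resulting surjection $\mathcal O_X^{b_r}\twoheadrightarrow \mathcal O_X((r+am)D)$ together with Serre vanishing to obtain surjections on global sections, whereas you invoke the $\Gamma_*$ formalism and Serre's finiteness for the coherent sheaf $\mathcal O_X(jD)$ directly; both arguments amount to the same cohomological input.
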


We give an outline of the proof of this well known result. There exists $m\in \ZZ_{>0}$ such that $\mathcal O_X(mD)$ is very ample. By the argument from the last two lines of page 122 of the proof of \cite[Theorem II.5.19]{H} and \cite[Remark[5.19.2]{H}, the algebra
$A=\oplus_{n\ge 0}\Gamma(X,\mathcal O_X(nmD))$ is a finitely generated $R$-algebra.  There exists $a\in \ZZ_{>0}$ such that for $0\le r<m$, 
$\Gamma(X,\mathcal O_X(r+am)D)$ is generated by global sections. Thus there exist short exact sequences
$0\rightarrow \mathcal K_r\rightarrow \mathcal O_X^{b_r}\rightarrow \mathcal O_X((r+am)D)\rightarrow 0$ 
for some $b_r\in \ZZ_{>0}$ and there exists $c\in \ZZ_{>0}$ such that we have surjections $\Gamma(X,\mathcal O_X(nmD))^{b_r}\rightarrow \Gamma(X,\mathcal O_X((nm+r+am)D)$ for all $n\ge c$ and $0\le r<m$. Thus,
since every module $\Gamma(X,\mathcal O_X(nD))$ is a finitely generated $R$-module, we have that $\oplus_{n\ge 0}\Gamma(X,\mathcal O_X((r+nm)D)$ is a finitely generated $A$-module for $0\le r<m$, and so
$\oplus_{n\ge 0}\Gamma(X,\mathcal O_X(nD))$ is a finitely generated $R$-algebra.

\section{Divisors on a resolution of singularities}\label{SecDiv} 
Let $R$ be a normal excellent local ring with maximal ideal $m_R$. Let $K$ be the quotient field of $R$ and let $k=R/m_R$ be its residue field.  Let $\pi:X\rightarrow \mbox{Spec}(R)$ be a birational projective morphism such that $X$ is nonsingular.   

A divisor (or integral divisor) $D$ on $X$ is a sum $D=\sum a_iE_i$ where $a_i$ are integers and $E_i$ are prime divisors on $X$. $D$ is a $\QQ$-divisor if the $a_i\in \QQ$ and $D$ is an $\RR$-divisor if the $a_i$ are in $\RR$. The support of $D$ is the algebraic set $\cup_{a_i\ne 0}E_i$. $D$ is an effective divisor if all $a_i$ are nonnegative. If $D_1$ and $D_2$ are divisors, then $D_2\ge D_1$ if $D_2-D_1$ is an effective divisor. 

If $D_1$ and $D_2$ are $\RR$-divisors on $X$, then $D_1$ and $D_2$ are linearly equivalent, written as $D_1\sim D_2$, if there exists $f\in K$ such that $(f)=D_1-D_2$. Here $(f)=\sum \nu_E(f)E$ where the sum is over all prime divisors $E$ on $X$ and $\nu_E$ is the valuation of the valuation ring $\mathcal O_{X,E}$.

Let $D=\sum a_iE_i$  be an $\RR$-divisor on $X$. There is an associated integral divisor 
$\lfloor D\rfloor = \sum \lfloor a_i\rfloor E_i$ where $\lfloor a\rfloor$ is the round down of a real number $a$.
Analogously, we define $\lceil D\rceil$ to be $\sum \lceil a_i\rceil E_i$ where $\lceil a\rceil$ is the round up of $a$. We have that
$-\lceil D\rceil =\lfloor -D\rfloor$.

Associated to $D$ is an invertible sheaf $\mathcal O_X(\lfloor D\rfloor)$, which is defined by
\begin{equation}\label{eq6}
\mathcal O_X(\lfloor D\rfloor)|U=\frac{1}{\prod f_i^{\lfloor a_i\rfloor}}\mathcal O_X|U
\end{equation}
whenever $U$ is an open subset of $X$ such that the $E_i$ have local equations $f_i=0$ in $\Gamma(U,\mathcal O_X)$.

For $V$ an open subset of $X$,
$$
\Gamma(V,\mathcal O_X(\lfloor D\rfloor))=\left\{f\in K\mid ((f)+\lfloor D\rfloor )\cap V\ge 0\right\}=\left\{f\in K\mid ((f)+D)\cap V\ge 0\right\}
$$
since for $f\in K$, $(f)+D\ge 0$ if and only if $(f)+\lfloor D\rfloor\ge 0$.

If $D_1$ and $D_2$ are $\RR$-divisors, then there is a natural map 
$$
\mathcal O_X(\lfloor D_1\rfloor)\otimes\mathcal O_X(\lfloor D_2\rfloor)\rightarrow \mathcal O_X(\lfloor D_1+D_2\rfloor).
$$
giving a natural map of $R$-modules 
$$
\Gamma(X,\mathcal O_X(\lfloor D_1\rfloor))\otimes\Gamma(X\mathcal O_X(\lfloor D_2\rfloor))\rightarrow \Gamma(X,\mathcal O_X(\lfloor D_1+D_2\rfloor)).
$$

This multiplication is somewhat subtle if $D_1$ and $D_2$ are not integral divisors. With the notation of (\ref{eq6}), suppose that $U$ is affine. Suppose that $D_1=\sum a_iE_i$ and $D_2=\sum b_iE_i$. Then 
$$
\mathcal O_X(\lfloor D_1\rfloor)|U\otimes \mathcal O_X(\lfloor D_2\rfloor)|U\rightarrow \mathcal O_X(\lfloor D_1+D_2\rfloor)|U
$$
is the map
\begin{equation}\label{eq7}
\frac{g}{\prod f_i^{\lfloor a_i\rfloor}}\otimes \frac{h}{\prod f_i^{\lfloor b_i\rfloor}}\mapsto 
\frac{gh\prod f_i^{c_i}}{\prod f_i^{\lfloor a_i+b_i\rfloor}}
\end{equation}
where $g,h\in \mathcal O_X(U)$ and $0\le c_i=\lfloor a_i+b_i\rfloor -(\lfloor a_i\rfloor+\lfloor b_i\rfloor)$.

 Let $D$ be an integral divisor on $X$. Write $D=G-F$ where $G$ and $F$ are effective integral divisors. Then 
 the natural inclusion $\mathcal O_X(-G)\rightarrow \mathcal O_X$ induces an inclusion $\mathcal O_X\rightarrow \mathcal O_X(G)$, and thus an inclusion $\mathcal O_X(-F)\rightarrow \mathcal O_X(D)$. Taking global sections, we have an inclusion 
 $\Gamma(X,\mathcal O_X(-F))\rightarrow \Gamma(X,\mathcal O_X(D))$. Now 
 $\Gamma(X,\mathcal O_X(-F))$ is an intersection of valuation ideals in $R$ so that $\Gamma(X,\mathcal O_X(D))\ne 0$. 
 In particular, we see that there exists an effective integral divisor $G$ on $X$ such that $G\sim D$.

\begin{Lemma}\label{LemmaD} Suppose that $R$ is an excellent local domain such that either $R$ is of equicharacteristic zero or $\dim R\le 3$ and that $\nu_1,\ldots,\nu_r$ are divisorial valuations of $R$. Then there exists a birational projective morphism $\pi:X\rightarrow \mbox{Spec}(R)$ such that $X$ is nonsingular and there exist prime divisors $F_1,\ldots,F_r$ on $X$ such that  $\mathcal O_{X,F_i}$ is the valuation ring of $\nu_i$ for $1\le i\le r$.
\end{Lemma}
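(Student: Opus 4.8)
The statement is essentially the assertion that a single nonsingular model can simultaneously realize finitely many prescribed divisorial valuations, and it follows by combining resolution of singularities with the standard fact that each divisorial valuation is realized on \emph{some} projective birational model. The plan is as follows. First, for each $i$, use the definition of a divisorial valuation together with the description of valuation rings essentially of finite type (cited above from \cite{CPS1}) to produce a projective birational morphism $\pi_i : Y_i \to \mathrm{Spec}(R)$ and a prime divisor $G_i$ on $Y_i$ with $\mathcal O_{Y_i, G_i} = V_{\nu_i}$; concretely one may blow up an ideal whose Rees valuation is $\nu_i$, or simply invoke \cite[Remark 6.6 to Lemma 6.5]{CPS1} already used in Section 2. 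Then take $Y$ to be the closure of the graph of the birational map, i.e.\ a component of the fibre product $Y_1 \times_{\mathrm{Spec}(R)} \cdots \times_{\mathrm{Spec}(R)} Y_r$ dominating $\mathrm{Spec}(R)$, normalized; this is again projective and birational over $\mathrm{Spec}(R)$ (here one uses that $R$ is Nagata, which holds since $R$ is excellent, to keep normalization finite), and on $Y$ each $\nu_i$ has a center of codimension one, since the valuation ring $V_{\nu_i}$ dominates the local ring of $Y$ at the center and has transcendence degree of its residue field forcing the center to be a divisor. Equivalently, $Y \to Y_i$ is an isomorphism at the generic point of $G_i$, so the strict transform of $G_i$ is a prime divisor on $Y$ with the right valuation ring.

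Second, apply resolution of singularities to $Y$. This is exactly where the hypothesis ``$R$ excellent of equicharacteristic zero, or $\dim R \le 3$'' enters: in equicharacteristic zero one has Hironaka's theorem together with its extension to excellent schemes (Temkin), and in dimension $\le 3$ one has Abhyankar's / Cossart--Piltant resolution in all characteristics for excellent schemes of dimension at most three. Either way there is a projective birational morphism $\rho : X \to Y$ with $X$ nonsingular, and we may moreover take $\rho$ to be an isomorphism over the (open, nonsingular in codimension one) locus containing the generic points of the $G_i$, since those points are already regular on $Y$ — normality of $Y$ gives regularity in codimension one, so no blowing up is needed near the $\eta_{G_i}$. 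Then $F_i :=$ the strict transform of $G_i$ on $X$ is a prime divisor with $\mathcal O_{X, F_i} = \mathcal O_{Y, G_i} = V_{\nu_i}$, and $\pi := \pi_1 \circ (\text{projection}) \circ \rho : X \to \mathrm{Spec}(R)$ is projective (composition of projective morphisms between Noetherian schemes) and birational. This produces the desired $X$, $\pi$, and $F_1, \ldots, F_r$.

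The one point requiring care — and the main obstacle — is ensuring that passing to the fibre product and then resolving does \emph{not} destroy the codimension-one centers: a priori the center of $\nu_i$ on the fibre product could acquire higher codimension, or the resolution $\rho$ could blow up the generic point of $G_i$. The first issue does not occur because the dominating component of the fibre product maps birationally to each $Y_i$, so valuations with divisorial center on $Y_i$ retain a divisorial center (this is the standard fact that a birational morphism is an isomorphism near the generic point of any divisor that is not exceptional — here $G_i$ is not exceptional for $Y \to Y_i$). The second issue is handled by the well-known strengthening of resolution of singularities that one may take the resolution to be an isomorphism over the regular locus (and $Y$ is regular in codimension one by normality); this refined form is available in both the equicharacteristic-zero and the $\dim \le 3$ settings. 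With these observations in place the argument is routine; no delicate computation is needed, only bookkeeping of strict transforms and the compatibility $\mathcal O_{X,F_i} = V_{\nu_i}$, which follows because all the morphisms involved are isomorphisms at the relevant generic points.
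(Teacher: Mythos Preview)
Your proof is correct and follows essentially the same route as the paper: build a single normal projective birational model $Y$ carrying all the $\nu_i$ as prime divisors, then resolve. The paper is more terse---it invokes \cite[Remark 6.6 to Lemma 6.5]{CPS1} directly to obtain the single normal $Y$ (bypassing your fibre-product construction) and leaves implicit the fact that the prime divisors survive under any birational projective $X\to Y$ with $Y$ normal, which you spell out carefully; note that this survival follows already from the valuative criterion and normality of $Y$, so the stronger ``isomorphism over the regular locus'' form of resolution is not actually needed.
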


 \begin{proof} By \cite[Remark 6.6]{CPS1} to \cite[Lemma 6.5]{CPS1}, there exists a birational projective morphism $Y\rightarrow \mbox{Spec}(R)$ such that $Y$ is normal and there exist prime divisors $E_1,\ldots,E_r$ on $Y$ such that $\mathcal O_{Y,E_i}$ is the valuation ring of $\nu_i$ for $1\le i\le r$. There exists a birational projective morphism $X\rightarrow Y$ such that $X$ is nonsingular by \cite[Main Theorem I(n), page 145]{Hi} or \cite{Te} if $R$ is excellent of equicharacteristic zero, and there exists such a resolution of singularities by \cite{CP} if $\dim R\le 3$.
 \end{proof}

\section{The $\gamma_{\Gamma}$ function}

We recall the $\gamma_{\Gamma}$ function defined in \cite[Section 3]{C5}. The statements and proofs in this section are based on corresponding statements and proofs for the $\sigma_{\Gamma}$ function on pseudo-effective divisors on a projective nonsingular variety in \cite[Chapter III, Section 1]{Nak}. 

We continue to  assume that $R$ is a normal excellent local ring and that $\pi:X\rightarrow \mbox{Spec}(R)$ is a birational projective morphism such that $X$ is nonsingular. 
Let $G=\sum a_iE_i$ be an effective $\RR$-divisor, and $\Gamma$ be a prime divisor on $X$. 
Let
$$
\mbox{ord}_{\Gamma}(G)=\left\{\begin{array}{ll}
a_i&\mbox{ if }\Gamma=E_i\\
0&\mbox{if }\Gamma\not\subset \mbox{Supp}(D).
\end{array}\right.
$$
For $D$ an $\RR$-divisor, let 
$$
\tau_{\Gamma}(D)=\inf\{\mbox{ord}_{\Gamma}(G)\mid G\ge 0\mbox{ and }G\sim D\},
$$
and define
$$
\gamma_{\Gamma}(D)=\inf\left\{\frac{\tau_{\Gamma}(mD)}{m}\mid m\in \ZZ_{>0}\right\}.
$$
Since $D$ is linearly equivalent to an effective divisor, there can be  only finitely many prime divisors $\Gamma$ such that $\gamma_{\Gamma}(D)>0$.

If $R$ has dimension 2 and $D$ is an integral divisor on $X$ then $\gamma_{\Gamma}(D)$ is a rational number, but there exists examples where $R$ has dimension 3 and integral divisors $D$ on $X$ such that $\gamma_{\Gamma}(D)$
 is an irrational number (\cite[Theorem 4.1]{C3}).

We have that $\tau_{\Gamma}(D_1+D_2)\le \tau_{\Gamma}(D_1)+\tau_{\Gamma}(D_2)$ for $\RR$-divisors $D_1$ and $D_2$.
If $f:\NN\rightarrow \RR_{\ge 0}$ is a function such that $f(k_1+k_2)\le f(k_1+k_2)$ for all $k_1,k_2$, then $\lim_{k\rightarrow\infty}\frac{f(k)}{k}$ exists (\cite[Lemma III.1.3]{Nak}). Hence 
$$
\gamma_{\Gamma}(D)=\lim_{m\rightarrow \infty}\frac{\tau_{\Gamma}(mD)}{m}.
$$
Thus if $\alpha\in \QQ_{>0}$, we have that
\begin{equation}\label{eq10}
\gamma_{\Gamma}(\alpha D)=\alpha\gamma_{\Gamma}(D).
\end{equation}
We also have that for $\RR$ divisors $D_1$ and $D_2$,
\begin{equation}\label{eq11}
\gamma_{\Gamma}(D_1+D_2)\le\gamma_{\Gamma}(D_1)+\gamma_{\Gamma}(D_2).
\end{equation}
If $m\in \ZZ_{>0}$ and $G\sim mD$ is an effective $\RR$-divisor, then $\mbox{ord}_{\Gamma}(G)\ge m\gamma_{\Gamma}(D)$.

\begin{Lemma}\label{LemmaE} Let $D$ be an $\RR$-divisor and $\Gamma$ be a prime divisor on $X$. Then 
$$
\Gamma(X,\mathcal O_X(\lfloor nD\rfloor))=\Gamma(X,\mathcal O_X(\lfloor n(D-\gamma_{\Gamma}(D)\Gamma)\rfloor))
$$
for all $n\in \NN$.
\end{Lemma}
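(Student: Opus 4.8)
The plan is to prove the asserted equality by establishing the two inclusions, the containment $\supseteq$ being formal and the containment $\subseteq$ being the one that genuinely uses the definition of $\gamma_{\Gamma}$.

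First I would record that $\gamma_{\Gamma}(D)\ge 0$: indeed each $\tau_{\Gamma}(mD)$ is an infimum of numbers $\mathrm{ord}_{\Gamma}(G)$ with $G\ge 0$, hence is nonnegative, and $\gamma_{\Gamma}(D)$ is an infimum of such quantities divided by positive integers. Consequently $D-\gamma_{\Gamma}(D)\Gamma\le D$, so $\lfloor n(D-\gamma_{\Gamma}(D)\Gamma)\rfloor\le\lfloor nD\rfloor$ for every $n\in\NN$, which gives an inclusion of sheaves $\mathcal O_X(\lfloor n(D-\gamma_{\Gamma}(D)\Gamma)\rfloor)\subseteq\mathcal O_X(\lfloor nD\rfloor)$ and hence the inclusion $\supseteq$ on global sections. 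Equivalently, using the description $\Gamma(X,\mathcal O_X(\lfloor D'\rfloor))=\{f\in K\mid (f)+D'\ge 0\}$ from Section~\ref{SecDiv}: if $(f)+n(D-\gamma_{\Gamma}(D)\Gamma)\ge 0$ then a fortiori $(f)+nD\ge 0$, since $n\gamma_{\Gamma}(D)\Gamma\ge 0$.

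For the reverse inclusion, the case $n=0$ is trivial, since then both sides equal $\Gamma(X,\mathcal O_X)$. So I would fix $n\in\ZZ_{>0}$ and take $f\in\Gamma(X,\mathcal O_X(\lfloor nD\rfloor))$; the case $f=0$ being vacuous, assume $f\ne 0$, so that $G:=(f)+nD$ is an effective $\RR$-divisor. Since $G-nD=(f)$ is principal, $G\sim nD$, and therefore $\mathrm{ord}_{\Gamma}(G)\ge n\gamma_{\Gamma}(D)$ by the property of $\gamma_{\Gamma}$ recalled just before the statement (itself immediate from $\gamma_{\Gamma}(D)=\inf_m\tau_{\Gamma}(mD)/m$ together with $\mathrm{ord}_{\Gamma}(G)\ge\tau_{\Gamma}(nD)$). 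Comparing coefficients at $\Gamma$ and at every other prime divisor, we get $G-n\gamma_{\Gamma}(D)\Gamma\ge 0$, that is, $(f)+n(D-\gamma_{\Gamma}(D)\Gamma)\ge 0$, so $f\in\Gamma(X,\mathcal O_X(\lfloor n(D-\gamma_{\Gamma}(D)\Gamma)\rfloor))$. This gives the inclusion $\subseteq$ and finishes the proof.

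The only substantive ingredient is the inequality $\mathrm{ord}_{\Gamma}(G)\ge n\gamma_{\Gamma}(D)$ for an effective $G\sim nD$, which is already available from the discussion preceding the lemma; everything else is bookkeeping with the correspondence between global sections of $\mathcal O_X(\lfloor\,\cdot\,\rfloor)$ and effective divisors in a fixed linear equivalence class, together with the harmless edge cases $n=0$ and $f=0$. I do not expect a genuine obstacle here; the role of the lemma is simply to package this elementary observation for later use, noting in particular that it is valid even when $\gamma_{\Gamma}(D)$ is irrational.
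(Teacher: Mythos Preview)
Your proof is correct and follows essentially the same approach as the paper's own proof: both establish the inclusion $\supseteq$ from $\gamma_{\Gamma}(D)\ge 0$, and for the reverse inclusion both use that $G=(f)+nD$ is effective with $G\sim nD$, whence $\mathrm{ord}_{\Gamma}(G)\ge n\gamma_{\Gamma}(D)$ and $(f)+n(D-\gamma_{\Gamma}(D)\Gamma)\ge 0$. You are slightly more careful in handling the edge cases $n=0$ and $f=0$ and in justifying $\gamma_{\Gamma}(D)\ge 0$, but the argument is the same.
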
 

\begin{proof} We have that $\lfloor n(D-\gamma_{\Gamma}(D)\Gamma)\rfloor\le \lfloor nD\rfloor$ for all $n\in \NN$ since $\gamma_{\Gamma}(D)\ge 0$. Thus there is a natural inclusion $\Gamma(X,\mathcal 
O_X(\lfloor n(D-\gamma_{\Gamma}(D)\Gamma)\rfloor)\rightarrow
\Gamma(X,\mathcal O_X(\lfloor nD\rfloor))$ for all $n\in \NN$.

Suppose that $f\in \Gamma(X,\mathcal O_X(\lfloor nD\rfloor))$. Then $(f)+nD\ge 0$ so $\mbox{ord}_{\Gamma}((f)+nD)\ge n\gamma_{\Gamma}(D)$.  Thus $(f)+nD=U+n\gamma_{\Gamma}(D)$ for some effective $\RR$-divisor $U$ which implies that $(f)+\lfloor n(D-\gamma_{\Gamma}(D)\Gamma)\rfloor$ is effective. Thus $\Gamma(X,\mathcal O_X(\lfloor n(D-\gamma_{\Gamma}(D)\Gamma)\rfloor))=\Gamma(X,\mathcal O_X(\lfloor nD\rfloor))$.
\end{proof}

\begin{Lemma}\label{LemmaA} Let $D$ be an $\RR$-divisor and $\Gamma$ be a prime divisor on $X$ such that $\gamma_{\Gamma}(D)>0$. Let $s\in \RR$ be such that  $0\le s\le \gamma_{\Gamma}(D)$. Then $\gamma_{\Gamma}(D-s\Gamma)=\gamma_{\Gamma}(D)-s$.
\end{Lemma}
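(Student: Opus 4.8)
The plan is to establish the stronger ``unnormalized'' identity
$$
\tau_{\Gamma}(m(D-s\Gamma))=\tau_{\Gamma}(mD)-ms\qquad\text{for every }m\in\ZZ_{>0},
$$
and then to divide by $m$ and let $m\to\infty$, invoking the formula $\gamma_{\Gamma}(E)=\lim_{m\to\infty}\tau_{\Gamma}(mE)/m$ valid for any $\RR$-divisor $E$ on $X$. Before that I would record two preliminary observations. First, all the infima occurring are over nonempty sets: as noted in Section \ref{SecDiv}, $\Gamma(X,\mathcal O_X(\lfloor nE\rfloor))\ne 0$ for every $\RR$-divisor $E$ on $X$, so $nE$ is linearly equivalent to an effective $\RR$-divisor for every $n$. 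Second, since $\gamma_{\Gamma}(D)=\inf\{\tau_{\Gamma}(mD)/m\mid m\in\ZZ_{>0}\}$, one has $\tau_{\Gamma}(mD)\ge m\gamma_{\Gamma}(D)$ for every $m$; this bound is what lets me stay in the effective cone below.

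For the inequality ``$\le$'' I would take an arbitrary effective $\RR$-divisor $G\sim mD$. Then $\mbox{ord}_{\Gamma}(G)\ge \tau_{\Gamma}(mD)\ge m\gamma_{\Gamma}(D)\ge ms$, the last step using the hypothesis $s\le\gamma_{\Gamma}(D)$. Hence $G-ms\Gamma$ is again an effective $\RR$-divisor, and the difference $(G-ms\Gamma)-m(D-s\Gamma)=G-mD$ is principal, so $G-ms\Gamma\sim m(D-s\Gamma)$; therefore $\tau_{\Gamma}(m(D-s\Gamma))\le \mbox{ord}_{\Gamma}(G)-ms$, and taking the infimum over all such $G$ gives $\tau_{\Gamma}(m(D-s\Gamma))\le\tau_{\Gamma}(mD)-ms$. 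For the reverse inequality I would take an arbitrary effective $G\sim m(D-s\Gamma)$; since $s\ge 0$, the $\RR$-divisor $G+ms\Gamma$ is still effective, and $(G+ms\Gamma)-mD=G-m(D-s\Gamma)$ is principal, so $G+ms\Gamma\sim mD$. Thus $\mbox{ord}_{\Gamma}(G)+ms=\mbox{ord}_{\Gamma}(G+ms\Gamma)\ge\tau_{\Gamma}(mD)$, and taking the infimum over all such $G$ gives $\tau_{\Gamma}(m(D-s\Gamma))\ge\tau_{\Gamma}(mD)-ms$.

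Combining the two inequalities yields the displayed identity, after which dividing by $m$ and letting $m\to\infty$ gives $\gamma_{\Gamma}(D-s\Gamma)=\gamma_{\Gamma}(D)-s$. I do not expect a genuine obstacle here: this is the direct analogue, in the setting of a resolution over a local ring, of the corresponding property of Nakayama's $\sigma_{\Gamma}$ function (\cite[Chapter III, Section 1]{Nak}), and the only points needing care are purely bookkeeping—that replacing $G$ by $G\mp ms\Gamma$ preserves linear equivalence (the difference of the two divisors is principal in each case, by $G\sim mD$ resp. $G\sim m(D-s\Gamma)$) and preserves effectivity (using $0\le ms\le\mbox{ord}_{\Gamma}(G)$ in the first case and $s\ge 0$ in the second), together with the elementary bound $\tau_{\Gamma}(mD)\ge m\gamma_{\Gamma}(D)$ that forces $\mbox{ord}_{\Gamma}(G)\ge ms$.
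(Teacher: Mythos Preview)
Your proof is correct and follows essentially the same approach as the paper's: both arguments use the bijection $G\leftrightarrow G\mp ms\Gamma$ between effective $\RR$-divisors linearly equivalent to $mD$ and those linearly equivalent to $m(D-s\Gamma)$, verifying effectivity via $\mbox{ord}_{\Gamma}(G)\ge m\gamma_{\Gamma}(D)\ge ms$ in one direction and $s\ge 0$ in the other. The only difference is organizational---you establish the sharper identity $\tau_{\Gamma}(m(D-s\Gamma))=\tau_{\Gamma}(mD)-ms$ at each level and then pass to the limit, whereas the paper works directly with $\gamma_{\Gamma}$ via an $\epsilon$-argument; both yield the same conclusion by the same mechanism.
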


\begin{proof} Given $\epsilon\in \RR_{>0}$, there exists $m\in \ZZ_{>0}$ and an $\RR$-divisor $G$ such that $G\sim mD$ and
 $\mbox{ord}_{\Gamma}(G)\le m(\gamma_{\Gamma}(D)+\epsilon)$. Since $\mbox{ord}_{\Gamma}(G)\ge m\gamma_{\Gamma}(D)$, $G-ms\Gamma$ is effective, with $\mbox{ord}_{\Gamma}(G-ms\Gamma)\le m(\gamma_{\Gamma}(D)-s+\epsilon)$. Thus $\gamma_{\Gamma}(mD-s\Gamma)\le \gamma_{\Gamma}(D)-s$.
 
 Suppose that $\Lambda\sim m(D-s\Gamma)$ for some $m\in \ZZ_{>0}$ is an effective $\RR$-divisor. Then 
 $\Lambda+ms\Gamma\sim mD$ so that $\mbox{ord}_{\Gamma}(\Lambda+ms\Gamma)\ge m\gamma_{\Gamma}(D)$. Thus
 $\mbox{ord}_{\Gamma}(\Lambda)\ge m(\gamma_{\Gamma}(D)-s)$ and so $\gamma_{\Gamma}(D-s\Gamma)\ge \gamma_{\Gamma}(D)-s$.
 \end{proof}

\begin{Lemma}\label{LemmaB} Let $D$ be an $\RR$-divisor and $\Gamma$ be a prime divisor on $X$ such that $\gamma_{\Gamma}(D)>0$. Then for $s\in \RR_{\ge 0}$, $\gamma_{\Gamma}(D+s\Gamma)=\gamma_{\Gamma}(D)+s$.
\end{Lemma}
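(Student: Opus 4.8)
The plan is to prove the two inequalities $\gamma_{\Gamma}(D+s\Gamma)\le \gamma_{\Gamma}(D)+s$ and $\gamma_{\Gamma}(D+s\Gamma)\ge \gamma_{\Gamma}(D)+s$ separately. The first is essentially immediate from the subadditivity already recorded: by \eqref{eq11} we have $\gamma_{\Gamma}(D+s\Gamma)\le \gamma_{\Gamma}(D)+\gamma_{\Gamma}(s\Gamma)$, and $\gamma_{\Gamma}(s\Gamma)\le s$ since $s\Gamma\ge 0$ is itself an effective divisor linearly equivalent to $s\Gamma$ with $\mbox{ord}_{\Gamma}(s\Gamma)=s$; hence $\gamma_{\Gamma}(D+s\Gamma)\le \gamma_{\Gamma}(D)+s$.

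For the reverse inequality I would argue directly from the definition of $\gamma_{\Gamma}$ via $\tau_{\Gamma}$. Fix $m\in\ZZ_{>0}$ and let $G\sim m(D+s\Gamma)$ be an arbitrary effective $\RR$-divisor. Then $G-ms\Gamma\sim mD$, but $G-ms\Gamma$ need not be effective, so I cannot directly conclude $\mbox{ord}_{\Gamma}(G)\ge ms+m\gamma_{\Gamma}(D)$ the way the proof of Lemma \ref{LemmaA} did (there the sign went the other way). Instead I would split on the value of $\mbox{ord}_{\Gamma}(G)$: either $\mbox{ord}_{\Gamma}(G)\ge ms$, in which case $G-ms\Gamma$ is effective and linearly equivalent to $mD$, forcing $\mbox{ord}_{\Gamma}(G-ms\Gamma)\ge m\gamma_{\Gamma}(D)$ and so $\mbox{ord}_{\Gamma}(G)\ge m(s+\gamma_{\Gamma}(D))$; or $\mbox{ord}_{\Gamma}(G)< ms$. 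The content of the lemma is that the second alternative cannot make $\gamma_{\Gamma}(D+s\Gamma)$ strictly smaller than $\gamma_{\Gamma}(D)+s$, and this is where the real work lies.

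The cleanest route through the second alternative, which I expect to be the main obstacle, is to reduce to Lemma \ref{LemmaA}. Observe that $\gamma_{\Gamma}(D+s\Gamma)\ge 0$ always, and if $\gamma_{\Gamma}(D+s\Gamma)\ge \gamma_{\Gamma}(D)+s$ we are done; so suppose for contradiction that $0\le \gamma_{\Gamma}(D+s\Gamma)=:t < \gamma_{\Gamma}(D)+s$. Since $\gamma_{\Gamma}(D+s\Gamma)>0$ (it is at least $\gamma_{\Gamma}(D)>0$ by the monotonicity argument above, as adding an effective multiple of $\Gamma$ cannot decrease $\gamma_\Gamma$ — indeed $\gamma_\Gamma(D) = \gamma_\Gamma((D+s\Gamma) - s\Gamma) \ge \gamma_\Gamma(D+s\Gamma) - s$ is exactly what we want, so I must be careful not to argue in a circle), I would instead apply Lemma \ref{LemmaA} to the divisor $D' := D+s\Gamma$ with the value $s$, provided $s\le \gamma_{\Gamma}(D')$: that gives $\gamma_{\Gamma}(D'-s\Gamma)=\gamma_{\Gamma}(D')-s$, i.e. $\gamma_{\Gamma}(D)=\gamma_{\Gamma}(D+s\Gamma)-s$, which rearranges to exactly $\gamma_{\Gamma}(D+s\Gamma)=\gamma_{\Gamma}(D)+s$. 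So the whole lemma follows from Lemma \ref{LemmaA} once one checks the hypothesis $s\le \gamma_{\Gamma}(D+s\Gamma)$; and this inequality is precisely the first (easy) direction run backwards: from $\gamma_{\Gamma}(D+s\Gamma)\le\gamma_\Gamma(D)+s$ we get nothing, but from subadditivity applied as $\gamma_\Gamma(D+s\Gamma) + \gamma_\Gamma(-s\Gamma\,\text{-ish})$... Since negative coefficients are not allowed, I would instead verify $s\le\gamma_\Gamma(D+s\Gamma)$ directly: any effective $G\sim m(D+s\Gamma)$ has $\mbox{ord}_\Gamma(G)\ge ms$ because $G+ (\text{pullback to kill the other components})$ — more simply, because $G - ms\Gamma \sim mD$ is linearly equivalent to an effective divisor, and if $\mbox{ord}_\Gamma(G)<ms$ then $\mbox{ord}_\Gamma(G-ms\Gamma)<0$ while $G-ms\Gamma$ is linearly equivalent to an effective divisor, which is fine for a single divisor but forces, upon passing to the limit defining $\gamma_\Gamma(D)\ge 0$... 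The honest main obstacle is exactly pinning down why $\tau_\Gamma(m(D+s\Gamma))\ge ms$ for all $m$; I expect this follows because $mD$ is linearly equivalent to an effective divisor $H$, whence $H+ms\Gamma\sim m(D+s\Gamma)$ is effective with $\mbox{ord}_\Gamma\ge ms$, giving $\tau_\Gamma(m(D+s\Gamma))\le \mbox{ord}_\Gamma(H)+ms$ (the wrong direction) — so in fact one needs: for every effective $G\sim m(D+s\Gamma)$, subtracting $ms\Gamma$ gives something linearly equivalent to $mD$ whose $\Gamma$-order is $\mbox{ord}_\Gamma(G)-ms\ge \gamma_\Gamma(mD)\ge m\gamma_\Gamma(D) > 0 \ge -ms$ — wait, $\gamma_\Gamma(mD)$ bounds $\mbox{ord}_\Gamma$ of \emph{effective} representatives only. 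I would resolve this by noting $G - ms\Gamma$ \emph{is} effective exactly when $\mbox{ord}_\Gamma(G)\ge ms$, and the Case analysis of the second paragraph already handles both sub-cases cleanly; so in the writeup I would present the direct two-case argument of paragraph two as the proof, taking the infimum over all $m$ and all $G$, rather than routing through Lemma \ref{LemmaA}.
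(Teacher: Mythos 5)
Your upper bound $\gamma_{\Gamma}(D+s\Gamma)\le\gamma_{\Gamma}(D)+s$ is correct and is exactly the paper's argument. The lower bound, however, has a genuine gap that your own text identifies but never closes. Your two-case analysis only yields $\mbox{ord}_{\Gamma}(G)\ge m(\gamma_{\Gamma}(D)+s)$ when $\mbox{ord}_{\Gamma}(G)\ge ms$; the case $\mbox{ord}_{\Gamma}(G)<ms$ is precisely what must be shown to be impossible (since $\gamma_{\Gamma}(D)>0$, any such $G$ would already violate the desired bound), and your closing claim that ``the Case analysis of the second paragraph already handles both sub-cases cleanly'' is not true --- Case 2 is left entirely open. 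Your alternative route through Lemma \ref{LemmaA} applied to $D'=D+s\Gamma$ collapses to the same point: the hypothesis $s\le\gamma_{\Gamma}(D+s\Gamma)$ you need to check is equivalent to the statement that every effective $G\sim m(D+s\Gamma)$ has $\mbox{ord}_{\Gamma}(G)\ge ms$, i.e.\ to the impossibility of Case 2. Note also that the bound $\mbox{ord}_{\Gamma}(G)\ge m\gamma_{\Gamma}(D)$ applies only to \emph{effective} representatives of $mD$, so nothing can be extracted from $G-ms\Gamma\sim mD$ when $G-ms\Gamma$ is not effective; this is the obstruction you circle around repeatedly without resolving.

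The missing idea is the convexity trick the paper uses. Set $\gamma=\gamma_{\Gamma}(D)$ and $E=D+s\Gamma$, and for a rational $c$ with $0<c<\frac{\gamma}{s+\gamma}$ write
$$
(1-c)(D-\gamma\Gamma)+cE=D+u\Gamma,\qquad u=-(1-c)\gamma+cs\in(-\gamma,0).
$$
Subadditivity (\ref{eq11}) and homogeneity (\ref{eq10}) give $\gamma_{\Gamma}(D+u\Gamma)\le(1-c)\gamma_{\Gamma}(D-\gamma\Gamma)+c\gamma_{\Gamma}(E)$. Lemma \ref{LemmaA} (which only ever subtracts multiples of $\Gamma$, where effectivity is preserved) evaluates both sides: $\gamma_{\Gamma}(D-\gamma\Gamma)=0$ and $\gamma_{\Gamma}(D+u\Gamma)=\gamma+u=c(\gamma+s)$, whence $c(\gamma+s)\le c\gamma_{\Gamma}(E)$ and $\gamma_{\Gamma}(E)\ge\gamma+s$. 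In effect, a hypothetical effective representative of $m(D+s\Gamma)$ with too-small $\Gamma$-order is combined with an almost-$\Gamma$-free representative of a multiple of $D-\gamma\Gamma$ to contradict Lemma \ref{LemmaA} at an intermediate divisor $D-|u|\Gamma$; without some such mechanism your lower bound does not go through.
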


\begin{proof} 
Let $E=D+s\Gamma$. Let $\gamma=\gamma_{\Gamma}(D)$. For $0<c<1$ we have 
$$
(1-c)(D-\gamma\Gamma)+cE=D+(-(1-c)\gamma+cs)\Gamma.
$$ 
Let $c$ be a rational number with $0<c<\frac{\gamma}{s+\gamma}$. Then
$-\gamma<-(1-c)\gamma+cs<0$. By (\ref{eq11}),
$$
\gamma(D+(-(1-c)\gamma+cs)\Gamma)\le \gamma_{\Gamma}((1-c)(D-\gamma\Gamma))+\gamma_{\Gamma}(cE).
$$
By Lemma \ref{LemmaA} and (\ref{eq10}),
$$
c(\gamma+s)=\gamma_{\Gamma}(D)+(-(1-c)\gamma+cs)\le(1-c)\gamma_{\Gamma}(D-\gamma\Gamma)+c\gamma_{\Gamma}(E)
=c\gamma_{\Gamma}(E). 
$$
Thus $\gamma_{\Gamma}(E)\ge\gamma+s$. By (\ref{eq11}), we have that 
$$
\gamma_{\Gamma}(E)\le\gamma_{\Gamma}(D)+\gamma_{\Gamma}(s\Gamma)\le \gamma_{\Gamma}(D)+s.
$$	
\end{proof}

\begin{Lemma}\label{LemmaC} Let $\Gamma$ be a prime divisor on $X$ and $D$ be a $\QQ$-divisor on $X$ such that $\gamma_{\Gamma}(D)=0$. Let $A$ be an ample $\QQ$-divisor on $X$. Then there exists an effective integral divisor $B$ and $m>0$ such that   $\Gamma$ is not in the support of $B$, 
$m(D+A)$ is an integral divisor and $B\sim m(D+A)$.
\end{Lemma}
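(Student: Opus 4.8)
The plan is to show that for a suitable $m\in\ZZ_{>0}$ with $m(D+A)$ integral, the prime divisor $\Gamma$ is not a base component of the complete linear system $|m(D+A)|$; equivalently, that there is an effective integral divisor $B\sim m(D+A)$ with $\Gamma\not\subset\mbox{Supp}(B)$. (Nonemptiness of the linear system is automatic, since $mD$ is linearly equivalent to an effective divisor and $mA$ is ample.) The guiding idea is that the hypothesis $\gamma_{\Gamma}(D)=0$ only produces effective representatives of multiples of $D$ whose order along $\Gamma$ is \emph{small}, though possibly nonzero, while the ampleness of $A$ supplies exactly the slack needed to absorb this residual multiplicity and make it vanish.

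First I would use the openness of the ample cone: since $A$ is an ample $\QQ$-divisor, the standard fact that $\mathcal O_X(nA)^{\otimes j}\otimes\mathcal O_X(\Gamma)=\mathcal O_X(jnA+\Gamma)$ is ample for $j\gg 0$ (for a fixed ample integral divisor $nA$) yields a $t_0\in\QQ_{>0}$ such that $A+t\Gamma$ is an ample $\QQ$-divisor for every rational $t$ with $0\le t\le t_0$. Next, since $\gamma_{\Gamma}(D)=\lim_{k\to\infty}\tau_{\Gamma}(kD)/k=0$, I would choose $k\in\ZZ_{>0}$, divisible enough that $kD$ and $kA$ are both integral and large enough that $\tau_{\Gamma}(kD)\le kt_0$. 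Setting $e:=\tau_{\Gamma}(kD)$, the infimum defining $\tau_{\Gamma}(kD)$ is attained (it is an infimum of nonnegative integers over the nonempty linear system $|kD|$), so there is an effective integral divisor $G\sim kD$ with $\mbox{ord}_{\Gamma}(G)=e$; write $G=G'+e\Gamma$ with $G'$ effective and $\Gamma\not\subset\mbox{Supp}(G')$.

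Since $kA+e\Gamma=k\bigl(A+\tfrac{e}{k}\Gamma\bigr)$ with $0\le e/k\le t_0$, the divisor $kA+e\Gamma$ is an ample integral divisor, so $N(kA+e\Gamma)$ is very ample, hence base point free, for all $N\gg 0$; fix such an $N$. Base point freeness provides a global section of $\mathcal O_X(N(kA+e\Gamma))$ not vanishing at the generic point of $\Gamma$, i.e.\ an effective integral divisor $H\sim N(kA+e\Gamma)$ with $\mbox{ord}_{\Gamma}(H)=0$. Finally set $m:=Nk$ and $B:=NG'+H$. Then $m(D+A)=N(kD+kA)$ is integral, and
$$
m(D+A)=N(kD+kA)\sim N(G+kA)=NG'+N(kA+e\Gamma)\sim NG'+H=B,
$$
while $B$ is an effective integral divisor with $\mbox{ord}_{\Gamma}(B)=N\,\mbox{ord}_{\Gamma}(G')+\mbox{ord}_{\Gamma}(H)=0$, i.e.\ $\Gamma\not\subset\mbox{Supp}(B)$. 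This gives the assertion.

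The one genuinely substantive step is the coupling in the middle two paragraphs: one must notice that it is enough to approximate $D$ to within the ``ample slack'' $t_0$ of $A$ — which $\gamma_{\Gamma}(D)=0$ makes possible — and that the leftover multiplicity $e\Gamma$ can then be folded into $A$ while preserving ampleness. Everything else is routine bookkeeping with linear equivalence of divisors, together with the two standard facts that a positive multiple of an ample divisor is very ample and that a base point free complete linear system has no base components; the only mild care required is that one must pass to the multiple $N(kA+e\Gamma)$, rather than use $kA+e\Gamma$ itself, to extract the $\Gamma$-free representative $H$.
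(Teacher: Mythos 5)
Your proof is correct and follows essentially the same route as the paper: both arguments use $\gamma_{\Gamma}(D)=0$ to produce an effective representative of a multiple of $D$ whose order $e$ along $\Gamma$ is small enough that the leftover $e\Gamma$ can be absorbed into the ample divisor (the paper splits it as $\alpha(n_0A+\Gamma)+(m-n_0\alpha)A$ where you keep the single ample piece $kA+e\Gamma$), and then both pass to a high multiple of the resulting ample divisor to obtain an effective representative avoiding $\Gamma$. The repackaging is purely cosmetic, so no further comparison is needed.
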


\begin{proof} There exists $n_0\in \ZZ$ such that $n_0A$ is an ample integral divisor and $n_0A+\Gamma$ is an ample integral divisor (\cite[Exercise II.7.5]{H}). There exists $m\in \ZZ_{>0}$ and an integral effective divisor $U$ such that $mD$ and $mA$ are integral divisors, $U\sim mD$ and $\alpha:=\mbox{ord}_{\Gamma}U<\frac{m}{n_0}$. Necessarily, $\alpha\in \NN$. Let $\overline U=U-\alpha\Gamma$. $\overline U$ is an integral divisor which does not have $\Gamma$ in it's support. 
$$
m(D+A)\sim \overline U+\alpha\Gamma+mA=\overline U+\alpha(n_0A+\Gamma)+(m-n_0\alpha)A
$$
with $m-n_0\alpha>0$. Further, $\overline U$, $\alpha(n_0A+\Gamma)$ and $(m-n_0\alpha)A$ are all three integral divisors. 
Since $n_0A+\Gamma$ and $A$ are ample $\QQ$-divisors, there exists $n\in \ZZ_{>0}$ and effective integral divisors $D_1$ and $D_2$ which do not contain $\Gamma$ in their supports such that $D_1\sim n(n_0A+\Gamma)$ and  and $D_2\sim n(m-n_0\alpha)A$. Thus 
$$
n(\overline U+\alpha(n_0A+\Gamma)+(m-n_0\alpha)A)\sim n\overline U+\alpha D_1+D_2
$$
is an effective divisor which does not contain $\Gamma$ in its support.
The divisor 
$$
B=(n\overline U+\alpha D_1+D_2)
$$
satisfies the conclusions of the lemma (with $B\sim nm(D+A)$).
\end{proof}

\section{Analytic spread and associated primes}
 
In this section we prove Theorem \ref{Theorem1} in the case that $R$ is normal. We assume that $R$ is normal throughout this section. Let $k=R/m_R$.
There exists by Lemma \ref{LemmaD}, a projective birational morphism $\pi:X\rightarrow \mbox{Spec}(R)$ such that $X$ is nonsingular 
 and there exists an effective $\QQ$-divisor $D$ on $X$ such that $\mathcal I=\mathcal I(D)$. Let 
 \begin{equation}\label{eq**}
 D=\sum a_iF_i
\end{equation}
with the $F_i$ distinct prime divisors and $a_i\in \QQ_{\ge 0}$.

After reindexing the $F_i$, we may assume that the first $s$ prime divisors $F_1,\ldots,F_s$ are the components of $D$ which contract to $m_R$. For $n\in \ZZ_{>0}$, we have that 
$m_R\in \mbox{Ass}(R/I_n)$ if and only if $I(nD)\ne I(n(\sum_{i>s}a_iF_i))$
which holds  if and only if there exists $j$ with $1\le j\le s$ such that $I(nD)\ne I(n(\sum_{i\ne j}a_iF_i))$.
Since $m_R\in \mbox{Ass}(R/I_{m_0})$ for some $m_0$, we have that some $F_j$ contracts to $m_R$ for some $j$ with 
$1\le j\le s$, $a_j>0$ and 
\begin{equation}\label{eq*}
I(m_0D)\ne I(m_0(\sum_{i\ne j}a_iF_i)).
\end{equation}

Since $F_j\subset \pi^{-1}(m_R)$, $F_j$ is a projective $k$-variety.  Let $L=\Gamma(F_j,\mathcal O_{F_j})$. Taking global sections of  the exact sequence 
$$
0\rightarrow \mathcal O_X(-F_j)\rightarrow \mathcal O_X\rightarrow \mathcal O_{F_j}\rightarrow 0,
$$
and since $\Gamma(X,\mathcal O_X(-F_j))=m_R$,
we obtain that $\Gamma(X,\mathcal O_X)/\Gamma(X,\mathcal O_X(-F_j))=R/m_R=k$ and 
we obtain an inclusion $k=R/m_R\rightarrow L$. Since $\pi$ is a projective morphism, we have that  
$L$ is a finitely generated $R$-module, and since $F_j$ is a projective variety, $L$ is a field. Thus $L$ is a finite extension field of $k$.

 Let $D_1=\sum_{i\ne j}a_iF_i$, so that
$-D\le -D_1=-D+a_jF_j$. Suppose that $\gamma_{F_j}(-D)>0$. Then $\gamma_{F_j}(-D_1)=\gamma_{F_j}(-D)+a_j$ by Lemma \ref{LemmaB}. Thus 
$$
\Gamma(X,\mathcal O_X(\lfloor -nD\rfloor))=\Gamma(X,\mathcal O_X(\lfloor -nD_1\rfloor))
$$
 for all $n\in \NN$ by Lemma \ref{LemmaE}, which is a contradiction to (\ref{eq*}). Thus $\gamma_{F_j}(-D)=0$ and $0\le \gamma_{F_j}(-D_1)<a_j$.

Let $t\in \QQ$ be such that $0<t<a_j-\gamma_{F_j}(-D_1)$. Let $D_2=D-tF_j$. Suppose that $\gamma_{F_j}(-D_2)>0$. Then 
by Lemmas \ref{LemmaA} and  \ref{LemmaB},
$$
\begin{array}{lll}
0=\gamma_{F_j}(-D_1-\gamma_{F_j}(-D_1)F_j)&=&\gamma_{F_j}(-D_2+(a_j-\gamma_{F_j}(-D_1)-t)F_j)\\
&=&\gamma_{F_j}(-D_2)+(a_j-\gamma_{F_j}(-D_1)-t)>0,
\end{array}
$$
 a contradiction. Thus $\gamma_{F_j}(-D_2)=0$.

Since $X$ is the blowup of an ideal $I$ in $R$, there exists an ample integral divisor $A$ on $X$ such that $I\mathcal O_X=\mathcal O_X(A)$. We may assume that all prime divisors in the support of $A$ appear amongst the $F_i$ in the expansion (\ref{eq**}) of $D$ by possibly adding some $F_i$ with $a_i=0$ to (\ref{eq**}).
Expand $A=\sum -b_iF_i$ with $b_i\in \NN$. The support of $A$ must contain all prime divisors on $X$ which contract to $m_R$, so $b_j>0$. We have that $F_j=-\frac{1}{b_j}A-\sum_{i\ne j}\frac{b_i}{b_j}F_i$, and
$$
-D=-D_2-tF_j=-D_2+\frac{t}{b_j}A+t(\sum_{i\ne j}\frac{b_i}{b_j}F_j).
$$
Let $\Lambda=-D_2+t(\sum_{i\ne j}\frac{b_i}{b_j}F_j)$. 
We have that $\gamma_{F_j}(\Lambda)=0$ since $\gamma_{F_j}(-D_2)=0$. Since $\Lambda$ is a $\QQ$-divisor,
there exists $m\in\ZZ_{>0}$ and an effective integral divisor $U$ such that $m(\Lambda+\frac{t}{2b_j}A)$ is an integral divisor, $F_j$ is not in the support of $U$ and 
$m(\Lambda+\frac{t}{2b_j}A)\sim U$ by Lemma \ref{LemmaC}. 

Let $\Theta = \Lambda+\frac{t}{2b_j}A$, so that $-D=\Theta+\frac{t}{2b_j}A$. 
Since $F_j$ is not in the support of $U$, $U$ restricts to 
an effective Cartier divisor $\overline U$ on $F_j$. We have natural isomorphisms 
$$
\mathcal O_X(m\Theta)\otimes\mathcal O_{F_j}\cong \mathcal O_X(U)\otimes \mathcal O_{F_j}\cong \mathcal O_{F_j}(\overline U).
$$
Let $\tau\in \Gamma(F_j,\mathcal O_X(m\Theta)\otimes\mathcal O_{F_j})$ be the (nonzero) section corresponding to $\overline U$, giving an inclusion $\tau:\mathcal O_{F_j}\rightarrow \mathcal O_X(m\Theta)\otimes\mathcal O_{F_j}$.

 Since $\frac{t}{2b_j}A$ is an ample $\QQ$-divisor, after possibly replacing $m$ with a multiple of $m$,  
  we may further assume that $-\frac{mt}{2b_j}A$ is a very ample integral  divisor and by \cite[Theorem II.5.2]{H},
  $$
  H^1(X,\mathcal O_X(n\frac{mt}{2b_j}A-F_j))=0\mbox{ for all $n\ge 1$}.  
  $$
  We have that
 $\mathcal O_X(\frac{mt}{2b_j}A)\otimes\mathcal O_{F_j}$ is a very ample invertible sheaf on the projective $k$-variety $F_j$, so that  $\overline S=\oplus_{n\ge 0}\Gamma(F_j,\mathcal O_X(n\frac{mt}{2b_j}A)\otimes\mathcal O_{F_j})$ is a finitely generated graded $k$-algebra (by Lemma 6), of dimension $\dim F_j+1=\dim R$.  Let $\overline S_{>0}=\oplus_{n>0}\Gamma(F_j,\mathcal O_X(n\frac{mt}{2b_j}A)\otimes\mathcal O_{F_j})$, the graded maximal ideal of $\overline S$.  The ring
 $\oplus_{n\ge 0}\Gamma(X,\mathcal O_X(n\frac{mt}{b_j}A))$ is a finitely generated graded $R$-algebra (by Lemma \ref{LemmaG}), and so its image $k\oplus \overline S_{>0}$ in $S$ is a finitely generated graded $k$-algebra. 
 Since $L$ is a finite extension field of $k$, $\overline S$ is finite over $S$ and so $\dim S=\dim R$ (by \cite[Corollary A.8]{BH}).
 
  The section $\tau$ induces inclusions
 $$
 \mathcal O_X(n\frac{mt}{2b_j}A)\otimes\mathcal O_{F_j}\stackrel{\tau^n\otimes{\rm id}}{\rightarrow}
 \left(\mathcal O_X(nm\Theta)\otimes_{F_j}\right)\otimes\left(\mathcal O_X(n\frac{mt}{2b_j}A)\otimes\mathcal O_{F_j}\right)
 \cong\mathcal O_X(-nmD)\otimes\mathcal O_{F_j}
 $$
 for all $n\in \NN$ (by our choice of $m$, $mD$ is an integral divisor).
 
 For $n\in \NN$, let $\Omega_n$ be the image of the natural surjection 
 $$
 \Gamma(X,\mathcal O_X(\lfloor-nD\rfloor))\rightarrow 
 \Gamma(F_j,\mathcal O_X(\lfloor -nD\rfloor)\otimes \mathcal O_{F_j}).
 $$

 The kernel of this surjection is $\Gamma(X,\mathcal O_X(\lfloor -nD\rfloor)\otimes\mathcal O_X(-F_j))$ 
 which contains 
 $$
 m_R\Gamma(X,\mathcal O_X(\lfloor -nD\rfloor)).
 $$
  Thus  $\Omega_n$ is a $k=R/m_R$-module. Further,
 $\Omega_0=R/m_R$.

  Identifying $S$ with the isomorphic graded $k$-algebra
 $$
 k\oplus\left(\oplus_{n> 0}\Gamma(F_j,\mathcal O_X(n\frac{mt}{2b_j}A)\otimes\mathcal O_{F_j})\tau^n\right),
 $$
  we have that $S$ is a finitely generated graded $k$-subalgebra of the graded $k$-algebra $\oplus_{n\ge 0}\Omega_{nm}$. There exists $\beta\in\ZZ_{>0}$ such that 
 $\mathcal O_X(-mD+\beta\frac{mt}{2b_j}A))$ is ample (\cite[Exercise II.7.5]{H})and $\Gamma(F_j,\mathcal O_X(\beta\frac{mt}{2b_j}A)\otimes\mathcal O_{F_j})\ne 0$. Let
 $$
 T=\oplus_{n\ge 0}\Gamma(F_j,\mathcal O_X(-nmD+n\beta\frac{mt}{2b_j}A)\otimes\mathcal O_{F_j})). 
 $$
 Then $T$ is a finitely generated $k$-algebra such that $\dim T=\dim R$ (by Lemma \ref{LemmaG}). A nonzero section $\lambda$ of $\Gamma(F_j,\mathcal O_X(\beta\frac{mt}{2b_j}A)\otimes\mathcal O_{F_j})$ induces an inclusion 
 $$
 \lambda:\mathcal O_{F_j}\rightarrow \mathcal O_X(\frac{\beta mt}{2b_j}A)\otimes\mathcal O_{F_j},
 $$
  and hence an inclusion
 of graded $k$-algebras $\oplus_{n\ge 0}\Omega_{nm}\subset T$.
  
 A $k$-algebra is called subfinite it is is a subalgebra of a finitely generated $k$-algebra. Thus
$$
S\subset \oplus_{n\ge 0}\Omega_{nm}\subset T
$$
are subfinite, and so
$$
\dim R=\dim S\le \dim \oplus_{n\ge 0}\Omega_{nm}\le \dim T=\dim R
$$
by \cite[Corollary 4.7]{KT}.  The graded extension $\oplus_{n\ge 0}\Omega_{nm}\rightarrow \oplus_{n\ge 0}\Omega_n$
is integral, so 
$$
\dim \oplus_{n\ge 0}\Omega_n\ge \dim \oplus_{n\ge 0}\Omega_{nm}=\dim R
$$
by the going up theorem (\cite[Theorem 5.11]{AM}). Now 
$$
m_R\Gamma(X,\mathcal O_X(\lfloor -lD\rfloor))\subset \Gamma(X,\mathcal O_X(\lfloor -lD\rfloor-F_j))
$$
for all $l$, so there is natural surjection of graded rings
$$
R[D]/m_RR[D]\rightarrow \oplus_{n\ge 0}\Omega_n.
$$
Thus the analytic spread $\ell(\mathcal I)=\dim R[D]/m_RR[D]\ge \dim R$.
Since the analytic spread is bounded above by $\dim R$ (\cite[Lemma 3.6]{CPS}) we have that $\ell(\mathcal I)=\dim R$.

\section{Proof for excellent domains}
In this section we finish the proof of  Theorem \ref{Theorem1}. That is, we do not assume that $R$  is normal.
   
   There exist $m_R$-valuations 
   $\nu_1,\ldots,\nu_t$  and $a_1,\ldots, a_t\in \QQ_{>0}$ such that $\mathcal I=\{I_n\}$ where 
   $I_n=I(\nu_1)_{a_1n}\cap \cdots \cap I(\nu_t)_{a_tn}$ for $n\ge 0$, with
   $I(\nu_i)_{m}=\{f\in R\mid \nu_i(f)\ge m\}$.
   
   Let $S$ be the normalization of $R$ in the quotient field of $R$. Let $\mathfrak m_1,\ldots,\mathfrak m_u$ be the maximal ideals of $S$.
   Let $J(\nu_i)_m=\{f\in S\mid \nu_i(f)\ge m\}$. 
   For $n\in \NN$, let 
   $$
   J_n=J(\nu_1)_{a_1n}\cap \cdots \cap J(\nu_t)_{a_tn}
   $$
   so that $J_n\cap R=I_n$.
   
   Since we assume that $m_R\in \mbox{Ass}_R(R/I_{m_0})$, there exists $y\in R/I_{n_0}$ such that $m_R=\mbox{ann}_R(y)$. We have that $y\in S/J_n$ is nonzero since  $R/I_{m_0}\rightarrow S/J_{m_0}$ is an inclusion. Thus $\mbox{Ann}_S(y)\ne S$. Since maximal elements in the set of annihilators of elements of $S/J_{m_0}$ are prime ideals (by \cite[Theorem 6.1]{Ma}), there exists a prime ideal $Q$ in $S$ which contains $\mbox{Ann}_S(y)$ and is the annihilator of an element $z$ of $S/J_{m_0}$. We have that $Q$ contains $m_RS$ and 
   \begin{equation}\label{eqZ3}
   \sqrt{m_RS}=\cap \mathfrak m_i
   \end{equation}
    so $Q$ is a maximal ideal $\mathfrak m_i$ of $S$. Thus
   \begin{equation}\label{eqZ2}
   \ell(\{(J_n)_{\mathfrak m_i}\})=\dim (S_{\mathfrak m_i})=\dim R,
   \end{equation}
    since Theorem \ref{Theorem1} has been proven for normal excellent local rings. 
  Let $B=\oplus_{n\ge 0}J_n$, which is a graded ring. 
 We thus have by (\ref{eqZ1}), (\ref{eqZ2}) and (\ref{eqZ3})  that $\dim B/m_RB=\dim R$.

  Thus there is a chain of distinct prime ideals 
 $$
 C_0\subset C_1\subset C_2\subset \cdots\subset C_d
 $$
 in $B$ which contain $m_RB$, where $d=\dim R$.

 There is a natural inclusion of graded rings $R[\mathcal I]=\oplus_{n\ge 0}I_n\subset B=\oplus_{n\ge 0}J_n$. We will now show that $B$ is integral over $R[\mathcal I]$. For $a\in \ZZ_{>0}$, let $R[\mathcal I]_a$ be the $a$-th truncation of $R[\mathcal I]$ and $B_a$ be the $a$-th truncation of $B$, so that $R[\mathcal I]_a$ is the subalgebra of $R[\mathcal I]$ generated by $\oplus_{n\le a}I_n$ and $B_a$ is the subalgebra of $B$ generated by $\oplus_{n\le a}J_n$. It suffices to show that homogeneous elements of $B$ are integral over $R[\mathcal I]$.  Suppose that $f\in J_a$ for some $a$.
 Then $f\in B_a$. Let $0\ne x$ be in the conductor of $S$ over $R$. Then $xJ_n\subset I_n$ for all $n$ since $I_n=J_n\cap R$.
Thus $xB_a\subset R[\mathcal I]_a$, so $f^i\in \frac{1}{x}R[\mathcal I]_a$ for all $i\in \NN$, and so the algebra 
$R[\mathcal I]_a[f]\subset \frac{1}{x}R[\mathcal I]_a$. Since $\frac{1}{x}R[\mathcal I]_a$ is a finitely generated $R[\mathcal I]_a$-module and $R[\mathcal I]_a$ is a Noetherian ring, the ring $R[\mathcal I]_a[f]$ is a finitely generated $R[\mathcal I]_a$-module, so that $f$ is integral over $R[\mathcal I]_a$.

We have a chain of prime ideals 
$$
Q_0\subset Q_1\subset Q_2\subset \cdots\subset Q_d
$$
in $R[\mathcal I]$ where $Q_i:=C_i\cap R[\mathcal I]$. The $Q_i$ are all distinct since the $C_i$ are all distinct and $B$ is integral over $R[\mathcal I]$ (by \cite[Theorem A.6 (b)]{BH}). We have  that $m_RR[\mathcal I]\subset Q_0$, so that 
$\dim R[\mathcal I]/m_RR[\mathcal I]\ge d$. Since this is the maximum possible dimension of $R[\mathcal I]/m_RR[\mathcal I]$ by (\ref{eqZ1}), we have that $\ell(\mathcal I)=d$.


\begin{thebibliography}{1000000000}



\bibitem{AM} M.F. Atiyah and I.G. MacDonald, Introduction to Commutative Algebra, Addison-Wesley, 1969.



\bibitem{BH} W. Bruns and J. Herzog, Cohen Macaulay rings, revised edition, Cambridge studies in advanced mathematics 39,  Cambridge University Press, 1993.

\bibitem{Bu} L. Burch, Codimension and analytic spread, Proc. Cambridge Phil. Soc. 72 (1972), 369-373.


\bibitem{CP} V. Cossart and O. Piltant, Resolution of singularities of arithmetical threefolds, J. Algebr 529 (2019), 268 - 535.

\bibitem{C1} S.D. Cutkosky, Zariski decomposition of divisors on algebraic varieties, Duke Math. J. 53 (1986), 149 - 156.

\bibitem{C2} S.D. Cutkosky, On unique and almost unique factorization of complete ideals II, Inventiones Math. 98 (1989), 59 - 74.

\bibitem{C3} S.D. Cutkosky, Examples of multiplicities and mixed multiplicities of filtrations, Contemporary Math.
 773 (2021), 19 - 34.


\bibitem{C5} S.D. Cutkosky, Mixed multiplicities of divisorial filtrations, Adv. Math. 358 (2019).

\bibitem{C4} S.D. Cutkosky, Analytic spread of filtrations on two dimensional normal local rings, arXiv:2203.05935.

\bibitem{CPS1} S.D. Cutkosky and P. Sarkar, Multiplicities and mixed multiplicities of arbitrary filtrations, to appear in Research in the Mathematical Sciences, volume in honor of J\"urgen Herzog. 

\bibitem{CPS} S.D. Cutkosky and P. Sarkar, Analytic spread of filtrations and symbolic algebras, to appear in Journal of the London Math. Soc., aXiv:2104.14463 

\bibitem{CS} S.D. Cutkosky and V. Srinivas, On a problem of Zariski on dimensions of linear systems, Annals of Math. 137 (1993), 531 - 559.

 \bibitem{DDGHN} H. Dao, A. De Stefani, E. Grifo, C. Huneke and L. 
N\'u\~nez-Betancourt, Symbolic powers of ideals, in  Singularities and foliations, geometry, topology and applications, 387 - 432, Springer Proc. Math. Stat. 222, Springer 2018.

\bibitem{DM} H. Dao, J.Monta\~no, Symbolic analytic spread, upper bounds and applications, J. Inst. Math. Jussieu (2021), 1969-1981.







\bibitem{H} R. Hartshorne, Algebraic Geometry, Graduate Texts in Mathematics, No. 52. Springer-Verlag, New York-Heidelberg, 1977.

\bibitem{Hi} H. Hironaka, Resolution of singularities of an algebraic variety over a field of characteristic zero, I,II, Annals of Math. 79 )1964), 109 - 203, 79 (1964), 205 - 326.

\bibitem{KT} G. Kemper and N.V. Trung, Krull dimension and monomial orders, J. Algebra 399 (2014), 782 - 800.

\bibitem{LA} R. Lazarsfeld, Positivity in Algebraic Geometry I and II, Springer Verlag, 2004.


\bibitem{Li2} J. Lipman, Rational singularities with applications to algebraic surfaces and unique factorization, Publ. Math. IHES 36 (1969), 195 - 279.


	
\bibitem{Li3} J. Lipman, Equimultiplicity, reduction and blowing up,  R.N. Draper (Ed.), Commutative Algebra, Lect. Notes Pure Appl. Math., vol. 68, Marcel Dekker, New York (1982), pp. 111-147.


\bibitem{Ma} H. Matsumura, Commutative Ring Theory, Cambridge Univ. Press, 1986.

\bibitem{McA}  S. McAdam, Asymptotic prime divisors and analytic spread, Proc. AMS 90 (1980), 555-559.




\bibitem{MS} H. Muhly and M. Sakuma, Asymptotic factorization of ideals, J. London Math. Soc, 38 (1963), 341 - 350.



\bibitem{Nak} N. Nakayama, Zariski-decomposition and abundance, MSJ Memoirs 14, Math. Soc. Japan, Tokyo, 2004.

\bibitem{ORWY}  T. Okuma, M. E. Rossi, K. Watanabe, K. Yoshida, Normal Hilbert coefficients and elliptic ideals in normal two dimensional singularities, to appear in Nagoya Math. J.



\bibitem{Re} D. Rees, Valuations associated with ideals II, J. London Math. Soc 31 (1956), 221 - 228.



\bibitem{HS} I. Swanson and C. Huneke, Integral Closure of Ideals, Rings and Modules, Cambridge University Press, 2006.


\bibitem{Te} M. Temkin, Desingularization of quasi-excellent schemes in characteristic zero, Adv. Math. 219 (2008), 488 - 522.



\bibitem{ZS2} O. Zariski and P. Samuel, Commutative Algebra Volume II, Van Nostrand, 1960.
\end{thebibliography}
 \end{document}